\providecommand{\U}[1]{\protect\rule{.1in}{.1in}}
\newtheorem{theorem}{Theorem}[section]
\newtheorem{definition}{Definition}[section]
\newtheorem{remark}{Remark}[section]
\newtheorem{assumption}{Assumption}[section]
\newenvironment{proof}[1][Proof]{\noindent\textbf{#1.} }{\ \rule{0.5em}{0.5em}}
\numberwithin{equation}{section}
\newcommand{\be}{\begin{equation}}
	\newcommand{\ee}{\end{equation}}
\newcommand{\bq}{\begin{eqnarray}}
	\newcommand{\eq}{\end{eqnarray}}
\begin{document}

\title{Optimal investment problem for a hybrid pension with intergenerational risk-sharing and longevity trend under model uncertainty}






\author{Ke Fu${}^{\mbox{\tiny a}}$,  \quad Ximin Rong${}^{\mbox{\tiny a,\tiny b}}$	
	\, , Hui Zhao${}^{\mbox{\tiny a},}$\thanks{Corresponding author. Email address: zhaohuimath@tju.edu.cn (H. Zhao).}\\
	${}^{\mbox{\tiny a}}$School of Mathematics, Tianjin University, Tianjin 300350, P.R. China\\
${}^{\mbox{\tiny b}}$Center for Applied Mathematics, Tianjin University, Tianjin 300072, P.R. China}

\date{}
\maketitle
\baselineskip15.6pt

\begin{center} {\bf Abstract} \end{center}
This paper studies the optimal investment problem for a hybrid pension plan under model uncertainty, where both the contribution and the benefit are adjusted depending on the performance of the plan. Furthermore, an age and time-dependent force of mortality and a linear maximum age are considered to capture the longevity trend. Suppose that the plan manager is ambiguity averse and is allowed to invest in a risk-free asset and a stock. The plan manager aims to find optimal investment strategies and optimal intergenerational risk-sharing arrangements by minimizing the cost of unstable contribution risk, the cost of unstable benefit risk and discontinuity risk under the worst-case scenario. By applying the stochastic optimal control approach, closed-form solutions are derived under a penalized quadratic cost function. Through numerical analysis and three special cases, we find that the intergeneration risk-sharing is achieved in our collective hybrid pension plan effectively. And it also shows that when people live longer, postponing the retirement seems a feasible way to alleviate the stress of the aging problem.

\vskip 3mm
\noindent {\bf Key words:} robust, hybrid penison, overlapping generations, risk-sharing, longevity trend, model uncertainty.

\vskip 3mm

\section{Introduction}

Recently, the number of retirees is increasing because of the increasing life expectancy, and there are great pressures on the pension systems due to the change of the economic and demographic.  Defined benefit (DB) plans and defined contribution (DC) plans are two traditional types of pension scheme. In a DB plan, plan trustees bear the financial risk while in a DC plan, the participants bear all the longevity and investment risks since they hold their own individual accounts. Furthermore, DC plans are affordable and transparent, as the participants have full control over their contributions. However, they may not obtain adequate benefits upon retirement because their benefits are somewhat depending on the asset returns and interest rates, which could be volatile and unpredictable. On the other hand, DB tends to provide adequate benefits as predetermined. Therefore, they are generally opaque and not necessarily afforable, especially from the plan trustees' perspective. Thus, neither of these plans is ideal to face the challenges of the demographic transition. The social partners have been planning to revise the regulatory framework for pension funds. An ideal pension scheme should provide adequate benefits and risk-sharing between participants and plan trustees.

Recently, many researchers propose the hybrid pension plan which combines the advantages of DB and DC plans. Hybrid pension plan is in the middle of the two "extremes" and can be more flexible. Therefore, many countries begin to implement various hybrid pension plans. For example, the collective DC plans (\cite{2013The}; \cite{2014The}), target benefit plans (\cite{2013New}; \cite{Wang2018Optimal}), risk-sharing DB plans (\cite{Pugh2008Funding}), the floor-offset plans ( \cite{Kai2009The}) and Notional Defined Contribution (NDC) plans (\cite{2016Optimal}, \cite{Jennifer2017Automatic}). \cite{2014Hybrid} provide an overview of the development of hybrid pension plans around the world and classify the plans into four different types: the hybrid DB plans in the Netherlands, the nonfinancial DC plan in Sweden, cash balance plans in the United States, Canada and Japan, and the Riester plans in Germany. 

The hybrid pension plan aims to provide a better retirement security on a sustainable, stable and affordable basis with risk sharing among different age cohorts. Actually, it is well documented that intergenerational risk-sharing may be welfare enhancing. \cite{2008Intergenerational} proves that intergenerational risk transfer is welfare-improving for all current and future generations in a collective pension scheme compared with participants in individual schemes. This is because a better intergenerational risk-sharing scheme makes it socially efficient to raise the collective risk exposure in order to take advantage of the large equity premium. \cite{2011Intergenerational} examines a funded system with a realistic description of risks in financial markets. Comparing risk sharing in various types of funded pension systems such as individual DC plans, collective DC plans and traditional DB plans, they find that risk-shifting over time (due to adjustment mechanisms) leads to higher utilities in pension schemes. \cite{Jean2020Levelling} studies intergenerational risk and cost sharing for a variety of collective funded pension plans with time-varying contributions and benefit levels considering the Volatility Index (VIX). He also concludes that pension schemes with a well-structured volatility-risk-adjusted component can be welfare enhancing for the entry and future cohorts. Similar researches can be found in \cite{Teulings2006Generational}, \cite{2012Voluntary}, \cite{Beetsma2016Intergenerational}, \cite{Boes2018Intergenerational}. Besides, there are other studies in the literature having tried to find optimal contributions and/or benefit adjustment policies within hybrid pension funds. See  \cite{2012Risk}, \cite{2019Optimal}, \cite{2020Optimal} and the references therein.

Furthermore, the issue of longevity is an important factor when designing intergenerational risk-sharing scheme. The increasement of people's average life expectancy poses a threat to the traditional pension plans. \cite{2013Long} show that  improvements in life expectancy have been driven to a large part by expansions of the maximum age from the second half of the 20th century onwards. Therefore, \cite{Knell2018Increasing} thinks that a straightforward way to capture increasing life expectancy is to assume that the maximum age increases in a linear fashion according to time. \cite{2015OPTIMAL} introduce the factor that is a random effect on the expected survival probability to reflect systematic longevity risk in a two-period model. Besides, it captures the longevity trend through the force of mortality that is not only age-dependent but also include a cohort-specific time. To be more specific, in the long run, people's force of mortality is relatively low due to the progress of medical treatment and development of economy. On the other hand, it is obvious that the force of mortality raises when people's age increases. Furthermore, \cite{Knell2018Increasing}, \cite{2019Continuous}, \cite{Andr2014On} and \cite{2017NDC} all take the socio-economic status and income into consideration in model of mortality. Inspired by \cite{Knell2018Increasing}, we assume that the maximum age is an incresing function of the time in this paper. Moreover, we apply an a age and time-dependent force of mortality which we call modified Makeham's Law to capture the longevity.

Traditionally, the manager of the hybrid pension plan is assumed to know exactly the true probability measures of financial market. However, decision-makers are uncertain about the true model in practice. Because, for example, the parameters (especially the drift parameters) are hard to estimate with precision (\cite{1980On}). Therefore, it is fair to say that any particular probability measure used to describe the model would be subject to a considerable degree of model misspecification. This type of uncertainty caused by the lack of information about the probability measure is also referred to as ambiguity, which is apparently different from risk where the model is characterized by a single probability measure. To deal with the decison makers' ambiguity aversion, \cite{0A} proposes a robust control approach in a continuous-time framework. They assume that the decision-maker does not trust the specific probability measure. and treats it as a reference measure, and takes into account a set of alternative measures that is statistically difficult to distinguish from the reference measure. The gap between the reference measure and an alternative measure is constrained by the relative entropy, which acts as a penalty term in the optimization procedure. This penalty captures the decision-maker’s ambiguity aversion about the reference measure. Rencently, \cite{2018Robust} investigate a robust optimal investment problem for an ambiguity-averse member of defined contribution (DC) pension plans with stochastic interest rate and stochastic volatility. In addition, \cite{2021WANG} consider the optimal investment and benefit payment problem for a target benefit plan (TBP) with default risk and model uncertainty.
 
However, to the best of our knowledge, there is few literature on investment problem of hybrid pension plan under ambiguity. But the plan manager is always uncertain about the reference model. Therefore we assume that a representative hybrid plan member is ambiguity averse in this paper. Suppose that all the participants are in an aggregate pension fund with both active members and retirees in an overlapping generations' economy. Contributions are paid by active members and at the same time the retired members receive benefits from the pension fund. It is more reasonable that all generations should share the risk and this is achieved by adjusting contributions and benefits as a function of the fund surplus which is the difference between the asset and target liability. This adjustment scheme is motivatied by real-world example (see e.g.\cite{2009Sharing}). In this collective hybrid pension scheme, we focus on the strategic portfolio allocation decisions together with pension policies, that are intergenerational risk sharing rules, to dynamically adjust financial position of the pension funds. Inspired by \cite{2019Optimal}, the ultimate goals of this study is to provide retirees adequate and stable retirement benefits and to ensure fair and effective contributions from active members. Meanwhile, plan manager is ambiguity averse and aims to keep the system financially sustainable ensuring the generational equity under the uncertainty. Mathematically, these goals are achieved through minimizing the expected utility of intergenerational risk, adjustment of the contribution and the benefit, which is called the discontinuity risk, the unstable contribution risk and the unstable benefit risk, respectively, by choosing appropriate cost function under the worst-case scenario. According to \cite{1997Stochastic}, \cite{2001Minimization} and \cite{2007Stochastic}, we consider the quadratic cost function.

The contributions of this paper is as follows. First of all, we take model uncertainty into account, which reflect the fact that in many situation decision-makers are uncertain about the true model, and try to find the robust strategy for the pension plan under the worst-case scenario. Then, the model we considered, comparing to \cite{2011Intergenerational} and \cite{2012Risk}, is in an overlapping generations' economy (OLG). This OLG model allows people to enter and leave at every continous time and all the generations will share the risk together. Besides, we consider the age and time-dependent force of mortality that is the modified Makeham's Law, which reflects the longevity trend of population. At last, followed by \cite{2020Optimal}, we consider the negative entrant growth rate of the labor supply, which could reflect the fact of low fertility. 

The rest of the paper is structured as follows. In Section 2, we introduce the model of financial market and the hybrid pension plan. In Section 3, we propose the optimal investment problem for the hybrid pension and robust strategy under model uncertainty. In Section 4, we give three special cases. In Section 5, we present some numerical results and sensitivity analysis. Finally, Section 6 concludes the paper and provide some further research ideas.

\section{Model formulation}
Let $(\Omega,\mathscr{F}, \mathbb{P})$ be a complete probability space, where $\mathscr{F}=\{\mathscr{F}_t\}_{t\geq0}$ is a complete and right continuous filtration generated by a two-dimensional standard Brownian Motion, that is, $\mathscr{F}_t$ is the filtration containing the information about the financial market and the information about the salary of the members at their time of retirement at time $t$, which is available to the sponsor of the pension plan, and $\mathbb{P}$ is a probability measure on $\Omega$.

\subsection{The financial market} 
The financial market consists of a risk-free asset (bond) and a risky asset (stocks). The price of the risk-free asset and risky asset at time $t\geq0$ is described by
\begin{align}
	&\textrm{d}S(t)=rS(t)\textrm{d}t, \quad  S(0)=1,\label{risk-free asset} \\
	&\textrm{d}S_{1}(t)=S_{1}(t)\left(\mu \textrm{d}t+\sigma \textrm{d}B(t)\right), \quad    S_{1}(0)=1,  \label{risky asset}
\end{align}
where $r>0$ denotes the risk-free interest rate. $\mu$ is the appreciation rate of the stock and $\sigma$ is the volatility rate, both $\mu$ and $\sigma$ being positive constants, and $B(t)$ is a standerd Brownian motion. To exclude arbitrage opportunities, we assume that $\mu>r$.

\subsection{Population dynamics}
For population, We will develop a continuous deterministic framework considering that age and time are continous variables. Plan members are assumed to enter the labor market at age $x_0$ and to keep active up to a retirement age $x_r$. As above mentioned, it is reasonable that the force of mortality decrease with time $t$ and increase with age $x$ because of the decreasing fertility and increasing life expectancy. Here, we consider an age and time-dependent mortality $\mu(x,t)$ followed by modified Makeham's Law, which may capture longevity trend dynamically as following assumption.
\begin{assumption}
	The age and time-dependent force of mortality $\mu(x,t)$ is denoted by 
	$$\mu(x,t)=\mathcal{A}+\mathcal{B} \theta^{x-\frac{1}{\omega} t},$$
\end{assumption}

where $\omega$ is a positive parameter called longevity parameter which shows longevity trend that is every $\omega$ year, the average life expectancy increases by one year. More precisely, for a $x$-year-old cohort at time 0, the force of mortality is $\mu(x,0)$. $\omega$ years later, the force of mortality at time $\omega$ is $\mu(x+\omega,\omega)=\mathcal{A}+\mathcal{B} \theta^{x+\omega-1}$. And theoretically, if there is no longevity parameter, the force of mortality at time $\omega$ could be like  $\mathcal{A}+\mathcal{B} \theta^{x+\omega}$. Therefore the model we considered shows that people have relatively low force of mortality and is equivalent to be one-year ``younger''. On the whole, the average life expectancy would increases by one year.

\begin{remark}
In this age and time-dependent force of mortality, we note that the higher $\omega$, the trend of longevity comes more slowly which means it takes long time for the whole people be long-lived. Instead, the lower $\omega$, the aging problem would be more urgent. For example, according to \cite{European Commission(2015)}, for the European Union(EU)-countries, life expectancy at birth is projected to increase over the next 50 years by about 7.5 years, which means almost $\omega=6.67$.
\end{remark}

\begin{assumption}
The maximum age is denoted by $m(t)$  
$$ m(t)=m_0+\xi t, 1 \geq \xi \ge 0$$
where $m_0$ is the initial maximum survival age.

\end{assumption}

This assumption shows that the maximum age increases in a linear fashion and is in line with the empirical literature. \cite{Knell2018Increasing} and \cite{Oeppen2002} e.g., analyze `record female life expectancy' (i.e., the highest value for female life expectancy reported in any country for which data are available) from 1840 to 2000 and they show that it follows an almost perfect linear development. Actually, \cite{2013Long} have shown that from the second half of the 20th century onwards improvements in life expectancy have been driven to a large part by expansions of the maximum age.


Denote $p(x,t)$ the survival function that the cohort who enter the system at time $t-(x-x_0)$ survives to age $x$. It holds that $p(x_0, t)=1, p(m(t),t) = 0$ and that survivorship declines with age, i.e., $\textrm{d}p(x,t)/\textrm{d}x \leq 0$ for $x \in [x_0, m(t)]$. The mortality hazard rate of cohort $t$ at age $x$ is given by 
$$\mu(x,t)\equiv -\frac{\textrm{d}p(x,t)}{\textrm{d}x}\frac{1}{p(x,t)},$$
therefore, $p(x,t)$ and $\mu(x,t)$ are not only age-specific but also include a cohort-specific time index. It holds that:
$$p(x,t)={\rm e}^{- \int_{0}^{x} \mu(x,t) \textrm{d}x},$$
Thus the age and time-dependent survival function $p(x,t)$ can be expressed as
\begin{equation}
	\begin{split}
		p(x,t)&={\rm e}^{-\int_{0}^{x-x_0}{\mu(x_0+u,t-(x-x_0)+u)} \textrm{d}u}\\
		&={\rm e}^{-\mathcal{A}(x-x_0)-\frac{\mathcal{B}}{ln\theta} (\theta^{ x-\frac{1}{\omega} t}-\theta^{(1 -\frac{1}{\omega})x_0+\frac{1}{\omega} (x-t)})}, \qquad t\ge 0, m(t)\geq x \ge x_0.
	\end{split}
\end{equation}

We denote by $n(t)$ the entry rate which describes the density of sizes of new cohort entering in working life at age $x_0$ at time $t$. $n(t)$ follows Malthusian demographic model $n(t)=n_0e^{\kappa t}$, where $n_0$ is the density of the new $x_0$-year-old entrants at time $0$, $\kappa$ is the growth rate of the new entrants. Inspired by \cite{2020Optimal} and \cite{he}, $\kappa<0$ represents the demographic model with serious aging problem, which is caused by low fertility and low mortality. Then the number of those attaining age $x$ at time $t$ is $$n(t-(x-x_0))p(x,t), \qquad x>x_0,$$
where $t-(x-x_0)$ is the time at which this cohort joined the pension system. Note that with $t\ge 0 $, the value of $t-(x-x_0)$ could be negative, meaning that a member whose age is $x$ at time $t\ge 0$ joined the pension system $(x-x_0)$ years ago. The total number of active member who make contributions at time $t$ is given by
 \begin{equation}
NC(t)=\int_{x_0}^{x_r} {n(t-(x-x_0))p(x,t)} \textrm{d}x, \label{NC}
\end{equation}
and the total number of retired members who recieve the benefits at time $t$ is
\begin{equation}
	NB(t)=\int_{x_r}^{m(t)} {n(t-(x-x_0))p(x,t)} \textrm{d}x. \label{NB}
\end{equation}

Here, we assume that during the working period, each cohort receives his labor income, which is normalized to 1. Therefore the total contribution income $TC=\{TC(t):t\ge 0\}$ is
\begin{equation}
	TC(t)=\int_{x_0}^{x_r} {n(t-(x-x_0))p(x,t)c(t)} \textrm{d}x=NC(t)c(t), \label{overline TC}
\end{equation}
and the total benefit payment $TB=\{TB(t):t\ge 0\}$ is
$$TB(t)=\int_{x_r}^{m(t)} {n(t-(x-x_0))p(x,t)b(t)} \textrm{d}x=NB(t)b(t),$$
where $c(t)$ and $b(t)$ is the amount of the contribution and benefit at time t, respectively. The specific forms about $c(t)$ and $b(t)$ will be given by the next section.

\begin{remark}
Equation (\ref{NC}) and (\ref{NB}) can be observed in some particular cases. For instance, the model analyzed in \cite{Wang2018Optimal} can be recovered by setting $\kappa=0$, which means the entry rate is constant $n(t)=n_0$. And $\omega=\infty$  means the force of mortality is only age-dependent, which is in line with the \cite{2020Optimal}.	
\end{remark}

The surplus (deficit) between the total contribution income and the total benefit payment will increase (decrease) the accumulation of the pension fund. Meanwhile, the accumulation is dynamically allocated to a risk-free asset and a risky asset.


\subsection{Hybrid pension scheme} 

The pension fund trustee is responsible for the asset-liability management. The amount of money invested in the risky asset at time $t$ is denoted by $\pi=\{\pi(t),t \ge 0\}$ and then the amount invested in risk-free asset is $A(t)-\pi(t)$. The dynamics of the asset process $A(t)$ can be described by 
\begin{equation}
	\textrm{d}A(t)=\pi(t)\frac{\textrm{d}S_1(t)}{S_1(t)}+[A(t)-\pi(t)]\frac{\textrm{d}S(t)}{S(t)}+[TC(t)-TB(t)]\textrm{d}t \label{asset process 1}, \quad 
	A(0)=a_0,
\end{equation}
where $a_0$ is the initial asset at time 0. Using (\ref{risk-free asset}) and (\ref{risky asset}), we can easily rewrite (\ref{asset process 1}) as
\begin{equation}
	\textrm{d}A(t)=[\pi(\mu-r)+A(t)r]\textrm{d}t+[NC(t)c(t)-NB(t)b(t)]\textrm{d}t+\sigma\pi\textrm{d}B(t), \quad
	 A(0)=a_0. \label{asset}
\end{equation}

Consider the target contribution $c$ and target benefit $b$ at time $0$ and a finite time interval $[0,T]$ with $T$ being the so-called terminal time. For each age cohort, denoted by $x$, the target liability equals the difference between the present value of risk-free benefits and the present value of the yet-to-be-paid risk-free contributions. This is in line with the one in the literature of \cite{2011Intergenerational}.
\begin{equation}
	L(x)=\begin{cases}
		\int_{0}^{T} {(NB(t)b{\rm e}^{\tau t}-NC(t)c{\rm e}^{\tau t}){\rm e}^{-r(t-(x-x_0))}} \textrm{d}t,& for \quad x_r\geq x \ge x_0;\\
		\int_{0}^{T} {NB(t)b{\rm e}^{\tau t}{\rm e}^{-r(t-(x-x_0))}} \textrm{d}t,& for\quad m(t)\geq x \ge x_r,	
	\end{cases}
\end{equation}
where $\tau$ is the target instantaneous growth rate of the target contribution and the target benefit.

At the aggregate level, the target liabilities of the fund can be calculated simply as the sum of the liabilities for each age cohort.
\begin{equation}
	\begin{split}
	L=&\int_{x_0}^{x_r} {\int_{0}^{T} {(NB(t)b{\rm e}^{\tau t}-NC(t)c{\rm e}^{\tau t}){\rm e}^{-r(t-(x-x_0))}} \textrm{d}t} \textrm{d}x \\
		+&\int_{x_r}^{m(t)} {\int_{0}^{T} {NB(t)b{\rm e}^{\tau t}{\rm e}^{-r(t-(x-x_0))}} \textrm{d}t} \textrm{d}x. \label{target liabilties}
	\end{split}
\end{equation}
Given the sationary age composition of the fund and the fixed target benefit level, the target liability $L$ is time invariant. Actually, if the fund invests fully in the risk-free asset, then the actual liability follows exactly as the target liability $L$. If the fund accepts mismatch risk, for example by investing in stocks, there may be funding surpluses or deficits with respect to the target liability.

The fund surplus is defined as the diference between assets and the target liability level
$$SP(t)=A(t)-L.$$
Notice that the collective pension schemes may leave surpluses or deficits which lead to intergenerational transfers and hence to intergenerational risk sharing.

Besides the adjustment of the plan members' traget liabilities as described in (\ref{target liabilties}) by linking to the fund assets as decribed in (\ref{asset}), for the collective hybrid pension model, we also dynamically and simultaneously adjust both the current individual contribution paid by the active members and individual benefit payments payable to the retirees, implicitly influenced by the surplus level of the pension fund in order to make intergenerational risk sharing.

To achieve our goal mentioned above, we specify a simple cohort contribution $c(t)$ and benefit $b(t)$ policy, where the contribution per cohort is a function of the target contribution rate level $c\rm{e}^{\tau t}$ and the funding residual per active cohort $\frac{SP(t)}{NC(t)}$, and the benefit per cohort is a function of the target benefit rate level $b\rm{e}^{\tau t}$ and the funding residual per retire cohort $\frac{SP(t)}{NB(t)}$.
\begin{gather}
	c(t)=c{\rm e}^{\tau t}-\alpha \frac{SP(t)}{NC(t)}=c{\rm e}^{\tau t}-\lambda_1(t),  \label{c(t)} \\
	b(t)=b{\rm e}^{\tau t}+\beta \frac{SP(t)}{NB(t)}=b{\rm e}^{\tau t}+\lambda_2(t), \label{b(t)}
\end{gather}
where $\lambda_1(t)=\alpha \frac{A(t)-L}{NC(t)}$, $\lambda_2(t)=\beta \frac{A(t)-L}{NB(t)}$ which represent the adjustments to the pension contributions and benefit payments, respectively. The slope coefficient $\alpha$ and $\beta$ change over time according to the adjustment of the hybrid pension system thus reflect the speed of absorbing the funding imbanlances, being the so-called spread parameters for contribution income and benefit payment, respectively. $\alpha=1$ or $\beta=1$ implies that a funding imbalance is immediately and fully abdorbed. More specifically, a fraction $\alpha$ of the funding residual is shared among employees and a fraction $\beta$ of the funding residual is shared among retirees. A lower vaule of $\alpha$ and $\beta$ imply that part of the funding surplus is shifted to the future, and shared across generations i.e. the lower vaule of $\alpha$ and $\beta$, the higher the degree of intergenerational risk sharing.

The two adjusted processes $\{c(t);t\ge 0\}$ and $\{b(t);t\ge 0\}$ describe the pension system over the time which consist of decision on contribution and benefit payment for each cohort at time $t$. When the underlying fund value at time t has surplus ($SP(t)>0$), indicating an out performance in assets investment, we expect both positive policy adjustment of $\lambda_1(t)$ and $\lambda_2(t)$ so that at time t the benefit payments would be increased while the contributions required being decreased. When the underlying fund incurs a deficit due to investment losses ($SP(t)<0$), we in turn expect both negative policy adjustment of $\lambda_1(t)$ and $\lambda_2(t)$. Here, increasing the contributions and decreasing the benefits can re-balanced the budget. All in all, high and low asset returns of the fund are spread over active members and retirees, which implies an intergenerational risk transfer or sharing. Such hybrid pension model has a similar mathematical structure as the hybrid model studied in \cite{2011Intergenerational}, \cite{2012Risk} and \cite{2019Optimal}.

In the following section we present, for the pension model described above, we set a continuous-time robust stochastic optimal control problem for the hybrid pension plan.

\section{Optimal investment problem for the hybrid pension under model uncertainty}
\subsection{Optimization problem}
In the traditional framework of pension plan or investment problem, the member
is assumed to be ambiguity-neutral, which implies that the manager has
complete confidence in the above model provided by probability
measure $\mathbb{P}$. However, the fact is that in many cases the member
cannot know exactly the true model, and thus any particular probability
measure used to describe the model would lead to potential
model misspecification. The parameters
of financial markets are difficult to estimate with
precision in practice and numerous experimental studies demonstrate
that the investors are averse not only to risk (the known
probability distribution) but also to ambiguity (the unknown
probability distribution) (refer to \cite{0Ambiguity}). Therefore, we assume
that the manager is ambiguity-averse and aims to protect himself
against the worst-case scenario. The manager recognizes that
the models under the reference probability $\mathbb{P}$ only approximate
the true models and takes into account some alternative models,
which can be defined via a family of probability measures
equivalent to $\mathbb{P}$ as follows
$$\{\mathscr{Q}:={\mathbb{Q}\arrowvert\mathbb{Q}\thicksim \mathbb{P}}\}.$$

Based on this assumption, we can use Girsanov’s theorem to
change the probability measures, so that the models defined by
the alternative measures only differ in terms of drift function. By
Girsanov’s theorem, for each $\mathbb{Q} \in \mathscr{Q}$, there is a progressively measurable process $\phi(t),t\in [0,T]$, which can be
referred as the probability distortion process, such that
$$\dfrac{\textrm{d}\mathbb{Q}}{\textrm{d}\mathbb{P}}|_{\mathscr{F}t}=\Lambda^{\phi}(t),$$
where
\begin{equation}
	\Lambda^{\phi}(t)=\textrm{exp}\big\{\int_{0}^{t} \phi(s)\textrm{d}B(s)-\frac{1}{2}\int_{0}^{t} \phi(s)^{2}\textrm{d}s\}.
\end{equation}
Moreover, if  $\phi(t),t\in [0,T]$ satisfies Novikov’s condition
$$\textrm{E}^{\mathbb{P}}\big\{\frac{1}{2} \int_{0}^{t}\phi(s)^{2}\textrm{d}s\}<\infty,$$
then the Radon-Nikodym derivative process $\Lambda^{\phi}(t)$ is a $\mathbb{P}$-martingale.

According to Girsanov’s theorem, under the alternative measure
$\mathbb{Q}$, the stochastic process $B^{\mathbb{Q}}(t)$ is one-dimensional standard Brownian motion, and we have
$$\textrm{d}B^{\mathbb{Q}}(t)=\textrm{d}B(t)-\phi(t)\textrm{d}t.$$
Hence, for the given measure $\mathbb{P}$, choosing an alternative measure $\mathbb{Q}$ is equivalent to determining the process $\phi$. Furthermore, the price of the stock and dynamic asset process under alternative measure $\mathbb{Q}$ become
\begin{align}
\textrm{d}S_{1}^{\mathbb{Q}}(t)&=S_{1}(t)\left(\mu \textrm{d}t+\sigma (\textrm{d}B^{\mathbb{Q}}(t)+\phi\textrm{d}t)\right), \quad    S_{1}(0)=1,\\
\textrm{d}A^{\mathbb{Q}}(t)&=[\pi(\mu-r)+A(t)r]\textrm{d}t+[NC(t)c(t)-NB(t)b(t)]\textrm{d}t+\sigma\pi(\textrm{d}B^{\mathbb{Q}}(t)+\phi\textrm{d}t).
\end{align}

\begin{definition}(Admissible Strategy). \rm For any fixed  $t \in [0,T]$, a strategy
	$$\boldsymbol{\pi} =\{(\pi(u),  \lambda_1(u)), \lambda_2(u)\}_{u\in[t,T]}$$
	is said to be admissible if
	\begin{itemize}
		\item[(i)] $\boldsymbol{\pi}$ is $\mathscr{F}_t$-adapted;
		\item[(ii)] for all $u \in [t,T]$, $f(t)\geq 0$ and $\textrm{E}\left[\int_{t}^{T}[\pi(u)]^2\textrm{d}u\right]<+\infty$;
		\item[(iii)] $(A^{\boldsymbol{\pi}}, \boldsymbol{\pi})$ is the unique solution to SDE (\ref{asset}).
	\end{itemize}
\end{definition}

Recall that the hybrid pension plan considered consists of both active members who make contributions to the fund and retired members who receive pension benefits. Since the contribution and benefit are time variant, the plan trustees must choose an asset allocation, contribution and benefit strategies which strike a reasonable compromise between the interests of these two groups. Ideally, the fund should make contributions and benefits in a stable and sustainable manner to each cohort as close to the target as possible. As the pension plan we considered is a collective one with different generations involved, taking the all generations into consideration, the goal of the pension trustees is that the pension plan can be run stably and sustainably and the investment risks are shared intergenerationally.

Mathematically, the objective of this continuous-time hybrid pension system is to seek a robust optimal strategy $\boldsymbol{\pi}$ to minimize the expected discounted cost of unstable contribution risk, unstable benefit risk and discontinuity risk over the time interval $[t,T]$. The model setting is in line with \cite{2020Optimal} and \cite{Wang2018Optimal}. In particular, the contribution and benefit should be stable and comparable with respect to target contibutionn  and target benefit, and the final accumulation should be neither too large nor too small, comparing with the initial accumulation. As such, we propose a cost function that consists of three parts. The first part is the deviation between the actual contribution and the target contribution. The target contribution $ce^{\tau t}$ is a reference level, At any time $t \in [0,T]$, when $c(t)$ is ralatively larger (smaller) than $ce^{\tau t}$, the current working cohorts (cohorts at previous times) naturally feel unstatisfied because of bearing higher contribution burden, comparing to the cohorts at previous times (current working cohorts). In this cricumastance, there is welfare cost due to the unstable contribution risk. The second part is the deviation between the actual benefit and the target benefit. As same as the first part, $b(t)$ should aproach the $be^{\tau t}$ as much as possible in order to make sure that is fair to all the cohorts. If not, in this cricumastance, there is also welfare cost due to the unstable benefit risk. The third part is the initial accumulation to depict the cost of the two risks. When the final accumulation is smaller (larger) than the initial accumulation, there exist intergenerational transfer from the current working cohorts (cohorts at other times) to the cohorts at other times (current working cohorts), In this circumstance, there is welfare cost due to the discontinuous accumulation risk. Hence, the objective is to minimize the deviations within the finite time horizon $[t,T]$. In the literature \cite{2001Minimization}, \cite{2007Stochastic} and \cite{Wang2018Optimal}, both the finite and infinite time horizon are studied, and the finite time horizon vary from years to decades. In this paper, we assumed a reasonable finite time horizon because that the pension manager pays more attention to the interests of the working cohorts in their term of mangement, which means that they do not take the interests of the unborn into consideration. Further reasons about finite time horizon can be found in \cite{2020Optimal}. Employing the quadratic deviations as the cost function, We formulate the robust optimal investment problem as follows:
\begin{equation} 
	\begin{split}
		V(t,a)&=\mathop{\textrm{min}}_{\boldsymbol{\pi}\in \Pi,\phi}\mathop{\textrm{max}}_{\mathbb{Q}^{\phi} \in \mathscr{Q}} \textrm{E}_{(\boldsymbol{\pi},\phi)}^{\mathbb{Q}} \bigg\{\int_{t}^{T} {\left[\gamma_1(c(u)-c{\rm e}^{\tau t})^2{\rm e}^{-ru}+\gamma_2(b(u)-b{\rm e}^{\tau t})^2{\rm e}^{-ru}\right]} \textrm{d}u\\
		&\quad \quad \quad \quad \quad \quad \quad \quad +\gamma_3(A(T)-a_0{\rm e}^{rT})^2{\rm e}^{-rT}+	\int_{t}^{T}{\frac{(\phi(u))^2}{2\Psi(u)}}   \textrm{d}u \bigg\},  
	\end{split}\label{J(t,a,w)}
\end{equation}
where $\Pi$ is a set of all the admissible strageies of $(\pi(t),\lambda_1(t),\lambda_2(t))$. $\gamma_1$,$\gamma_2$ and $\gamma_3$ are nonnegative constants, interpreted as the weight parameter which measures the importance of the cost of unstable contribution risk, the cost of the unstable benefit risk and the cost of the discontinuous risk in the overall cost function, respectively. The expectation is computed under the alternative measure $\mathbb{Q}$ defined by $\phi$. The last part in (\ref{J(t,a,w)}) is the penalty term depending on the relative entropy which measures the discrepancy between the alternative measure and the reference measure, and the increase in the relative entropy from $t$ to $t + \textrm{d}t$ equals $\frac{1}{2}\phi^{2}\textrm{d}t$. This integral term is scaled by $\Psi(t)$ which is nonnegative and stand for the preference parameters for ambiguity aversion, measuring the degree of confidence to the reference probability $\mathbb{P}$. The larger $\Psi(t)$ is, the more ambiguity-averse the insurer is, the smaller the penalty for deviating from the reference measure is, and the less faith in the reference measure the manager has. Finally, when $\Psi=0$, the manager is completely convinced that the true measure is the reference measure $\mathbb{P}$. In this case problem (\ref{J(t,a,w)}) degenerates to the classical the quadratic deviations minization in the absence of ambiguity. When $\Psi=\infty$, the penalty term vanishes, which implies that the member is extremely ambiguous (see \cite{2003Model}; \cite{2015Robust};\cite{2018Robust}). And for analytical tractability, we assume that $\Psi(t)=k/V$, where $k$ is positive constants and stand for the ambiguity aversion parameter, meaning manager's the different levels of ambiguity aversion about the stock’s price.

Thus, the continuous-time robust stochastic optimal control problem for the hybrid pension plan is established. By choosing strategies, the manager seeks to minimize the penalized objective function in the worst-case
scenario. In the section below we aim at finding the optimal policy which solves the optimal control problem.

\subsection{Robust strategy to the optimization problem}
In this section, we use standard methods to solve the optimal control problem (\ref{J(t,a,w)}) and derive closed-form expressions of the optimal policy, denoted by $(\phi^*,\pi^*,\lambda_1^*,\lambda_2^*)$, within all the admissible policies.

First, we define a variational operator $\mathscr{A}^{\boldsymbol{\pi},\phi}$
\begin{equation}
	\begin{split}
		\mathscr{A}^{\boldsymbol{\pi},\phi}V(t,a)=& V_t+\left[\pi(\mu-r)+ar+NC(t)(c{\rm e}^{\tau t}-\lambda_1)-NB(t)(b{\rm e}^{\tau t}+\lambda_2)+\sigma\pi\phi\right]V_a\\
		&+\frac{1}{2}\pi^2\sigma^2V_{aa}+\left(\gamma_1 \lambda_1^2+\gamma_2 \lambda_2^2\right){\rm e}^{-rt},
	\end{split}
\end{equation}
where $V_t$, $V_{a}$, $V_{aa}$ are partial derivatives of $V(t,a)$.

According to the principle of stochastic dynamic programming,
the Hamilton-Jacobi-Bellman (HJB) equation can be derived as(see \cite{2003Model}; \cite{2015Robust}):
\begin{equation}
	\mathop{\textrm{min}}_{\boldsymbol{\pi}\in \Pi,\phi} \mathop{\textrm{max}}_{\mathbb{Q}} \bigg\{\mathscr{A}^{\boldsymbol{\pi},\phi}V(t,a)+ \frac{(\phi(t))^2}{2\Psi(t)} \bigg\} =0. \label{eqn:HJB}
\end{equation}
Based on the above setting, we derive a solution to the HJB
equation (\ref{eqn:HJB}) with $\Psi(t)=k/V$ . We state our findings on the optimal asset allocation, contribution and benefit adjustment policies for robust optimal control problem (\ref{J(t,a,w)}) in the following theorem. For notation simplicity, we let $\varphi=(\mu-r)/\sigma$, which is the Sharpe ratio of the risky asset.

\begin{theorem} \label{thm:main} \rm For the optimal control problem (\ref{J(t,a,w)}), the optimal asset allocation policy, contribution and benefit adjustment policy are given, respectively, by		
\begin{align}
\phi^{*}&=-\frac{k\varphi}{2(2k-1)},\\
\pi^*(t,a)&=\frac{\varphi}{(2k-1)\sigma}(a+Q(t)), \label{pi} \\
\lambda_1^*(t,a)&=\frac{\gamma_3}{\gamma_1}NC(t)P(t)(a+Q(t) ), \label{lambda_1}  \\
\lambda_2^*(t,a)&=\frac{\gamma_3}{\gamma_2}NB(t)P(t)(a+Q(t) ),   \label{lambda_2}
\end{align}
where $NC(t)$ and $NB(t)$ is show as (\ref{NC}) and (\ref{NB}) respectively. and the corresponding value function is given by
\begin{equation}
 V(t,a)=\gamma_3{\rm e}^{-rt}P(t)(a+Q(t))^{2},
\end{equation} 
where the expressions of $P(t)$ and $Q(t)$ are given below. We denote that

\begin{equation}\label{g_1 g_2 g_3}
g_{1}=\frac{NC(t)^{2}}{4\gamma_1\rm{e}^{-rt}}+\frac{NB(t)^{2}}{4\gamma_2\rm{e}^{-rt}};\quad \quad 
g_{2}=NC(t)c{\rm e}^{\tau t}-NB(t)b{\rm e}^{\tau t};\quad \quad
g_{3}=\frac{\varphi^{2}}{2k-1},
\end{equation} 

then $P(t)$ and $Q(t)$ can be show as 
\begin{equation}
	\begin{split}	
		P(t)&=\frac{1}{R(t)},\\
		Q(t)&=\frac{g_{2}}{r}(1-{\rm e}^{-r(T-t)})-a_{0}{\rm e}^{rt},
	\end{split}
\end{equation}
where
\begin{equation}
	\begin{split}
		R(t)&={\rm e}^{-(r+g_{3})(T-t)}+\frac{4g_{1}\gamma_3}{2r+g_{3}}({\rm e}^{-rt}-{\rm e}^{-(2r+g_{3})T+(r+g_{3})t}).
	\end{split}
\end{equation}
\end{theorem}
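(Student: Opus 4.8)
The plan is to solve the HJB equation (\ref{eqn:HJB}) by a quadratic ansatz for $V$ together with a verification argument. I would begin from the guess
$$V(t,a)=\gamma_3 {\rm e}^{-rt}P(t)\big(a+Q(t)\big)^2$$
for smooth functions $P,Q\colon[0,T]\to\mathbb{R}$ with $P>0$, and read off the terminal data that the definition (\ref{J(t,a,w)}) forces: the running cost vanishes at $u=T$ while the terminal term is $\gamma_3{\rm e}^{-rT}(A(T)-a_0{\rm e}^{rT})^2$, so necessarily $P(T)=1$ and $Q(T)=-a_0{\rm e}^{rT}$. Differentiating the ansatz gives $V_a=2\gamma_3{\rm e}^{-rt}P(a+Q)$, $V_{aa}=2\gamma_3{\rm e}^{-rt}P>0$ and $V_t=\gamma_3{\rm e}^{-rt}\big[(P'-rP)(a+Q)^2+2PQ'(a+Q)\big]$; the sign $V_{aa}>0$ is what makes the minimization over $\pi$ inside (\ref{eqn:HJB}) well posed.

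Next I would carry out the pointwise optimization in the variational operator. The adjustment controls decouple immediately: the first-order conditions in $\lambda_1,\lambda_2$ give $\lambda_1^*=NC(t)V_a/(2\gamma_1{\rm e}^{-rt})$ and $\lambda_2^*=NB(t)V_a/(2\gamma_2{\rm e}^{-rt})$, which turn into (\ref{lambda_1})--(\ref{lambda_2}) on inserting $V_a$. The investment $\pi$ and the distortion $\phi$ are coupled through the term $\sigma\pi\phi V_a$, so I would treat their two first-order conditions as a $2\times 2$ linear system and solve it symbolically first, obtaining $\pi^*=-(\mu-r)V_a/\big[\sigma^2(V_{aa}-\Psi V_a^2)\big]$ and $\phi^*=-\Psi\sigma\pi^*V_a$, and only afterwards substitute $\Psi=k/V$ and the quadratic form. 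The decisive simplification is that for this ansatz $\Psi V_a^2/V_{aa}=2k$ is a constant, so $V_{aa}-\Psi V_a^2=(1-2k)V_{aa}$; hence $\pi^*$ collapses to $\tfrac{\varphi}{(2k-1)\sigma}(a+Q)$ as in (\ref{pi}), and back-substitution renders $\phi^*$ state-independent.

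Then I would insert $(\pi^*,\lambda_1^*,\lambda_2^*,\phi^*)$ into (\ref{eqn:HJB}). Using the shorthands $g_1,g_2,g_3$ of (\ref{g_1 g_2 g_3}), the optimized $\pi,\phi$ terms contribute $\tfrac12 g_3\,V_a^2/V_{aa}$ and the optimized $\lambda$ terms contribute $-g_1V_a^2$, both multiples of $(a+Q)^2$, while $V_t$ and $(ar+g_2)V_a$ supply the only $(a+Q)^1$ terms and no constant term survives. Since $a$, hence $a+Q(t)$, ranges over all of $\mathbb{R}$, the HJB equation is therefore equivalent to the separate vanishing of the coefficients of $(a+Q)^2$ and of $(a+Q)$. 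The former is the Bernoulli equation $P'+(r+g_3)P-4\gamma_3 g_1{\rm e}^{-rt}P^2=0$, which under $P=1/R$ linearizes to $R'=(r+g_3)R-4\gamma_3 g_1{\rm e}^{-rt}$ and, integrated with $R(T)=1$, gives the stated $R(t)$; the latter is the linear equation $Q'=rQ-g_2$, whose solution with $Q(T)=-a_0{\rm e}^{rT}$ is the stated $Q(t)$.

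Finally I would run a verification theorem: the constructed $V$ is $C^{1,2}$, solves (\ref{eqn:HJB}) with the correct terminal value, and $(\pi^*,\lambda_1^*,\lambda_2^*)$ is admissible --- its affine-in-state form together with standard moment bounds for the controlled SDE (\ref{asset}) yields the square-integrability required in the definition of admissible strategy. A localization plus a transversality/uniform-integrability argument then shows that $V(t,A(t))$ plus the accumulated cost is a supermartingale under every alternative measure along $(\pi^*,\lambda_1^*,\lambda_2^*)$ and a submartingale under the pointwise worst-case measure along any admissible strategy, identifying $V$ with the value function and $(\phi^*,\pi^*,\lambda_1^*,\lambda_2^*)$ with the optimizer. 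The step I expect to be the genuine obstacle is precisely this coupled $\pi$--$\phi$ optimization under the self-referential weight $\Psi=k/V$: one must resolve the first-order system \emph{before} plugging in the ansatz and then verify that the HJB really collapses to honest ODEs in $P$ and $Q$ with no residual state dependence (including checking that the apparent singularity of $\Psi$ on $\{a=-Q(t)\}$ is harmless), all while keeping the ${\rm e}^{-rt}$ weights hidden inside $g_1,g_2$ and the min--max signs under control.
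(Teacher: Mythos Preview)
Your proposal is correct and follows essentially the same route as the paper: quadratic ansatz $V=\gamma_3{\rm e}^{-rt}P(t)(a+Q(t))^2$, pointwise first-order conditions for $(\phi,\pi,\lambda_1,\lambda_2)$, back-substitution into the HJB, and separation into the Bernoulli ODE for $P$ (linearized via $P=1/R$) and the linear ODE for $Q$. The only differences are cosmetic---the paper optimizes sequentially (inner $\phi$, then outer $\pi$) rather than solving your coupled $2\times2$ system, and it matches powers of $a$ rather than of $(a+Q)$---while your inclusion of a verification step and the remark on the removable singularity at $a=-Q(t)$ go slightly beyond what the paper supplies.
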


\begin{proof}
According to the first-order optimality conditions, differentiating expression in bracket of (\ref{eqn:HJB}) with respect to $\phi$, the function $\phi^{*}(t)$ is given by
\begin{equation}
	\phi^{*}=-\frac{k\sigma\pi V_{a}}{V}. \label{phi}
\end{equation}
Substituting (\ref{phi}) into (\ref{eqn:HJB}), we have
\begin{equation}
	\begin{split}
		\mathop{\textrm{min}}_{\boldsymbol{\pi}\in \Pi}\big\{&V_t+\left[ar+NC(t)c{\rm e}^{\tau t}-NB(t)b{\rm e}^{\tau t}\right]V_a-(NC(t)\lambda_1+NB(t)\lambda_2)V_a\\
		&+\pi(\mu-r)V_a-\frac{k\sigma^{2}\pi^{2}V_a^{2}}{2V}+\frac{1}{2}\pi^2\sigma^2V_{aa}+\left(\gamma_1 \lambda_1^2+\gamma_2 \lambda_2^2\right)\rm{e}^{-rt} \big\}=0,
	\end{split}\label{min}
\end{equation}
Differentiating expression in bracket of (\ref{eqn:HJB}) with respect to $\pi$, $\lambda_1$ and $\lambda_2$, setting to zero and solving them give immediately
\begin{align}
\pi^{*}=\frac{\varphi V_{a}} {\sigma(\frac{kV_{a}^{2}}{V}-V_{aa})},\quad 
\lambda_1^{*}=\frac{NC(t)V_{a}}{2\gamma_1\rm{e}^{-rt}},\quad
\lambda_2^{*}=\frac{NB(t)V_{a}}{2\gamma_2\rm{e}^{-rt}}, \label{pi lambda1 lambda2}
\end{align}
Substituting (\ref{pi lambda1 lambda2}) into (\ref{min}), denote
$$g_{1}=\frac{NC(t)^{2}}{4\gamma_1{\rm e}^{-rt}}+\frac{NB(t)^{2}}{4\gamma_2{\rm e}^{-rt}},$$
$$g_{2}=NC(t)c{\rm e}^{\tau t}-NB(t)b{\rm e}^{\tau t},$$
we have
\begin{equation}\label{3.12}
V_t+\left[ar+g_{2}\right]V_a-g_{1}V_a^{2}+\frac{\varphi^{2}V_a^{2}}{2(\frac{kV_a^{2}}{V}-V_{aa})}=0,
\end{equation}
To solve (\ref{3.12}), we try to speculate on the solution in the following
form
\begin{equation}\label{eqn:boundary}
	\begin{split}
		V(t,a)=\gamma_3{\rm e}^{-rt}P(t)(a+Q(t))^{2},\\
		P(T)=1, \quad Q(T)=-a_{0}{\rm e}^{rT},
	\end{split}	
\end{equation}
whose partial derivatives are
\begin{equation}\label{eqn:differential}
	\begin{split}
	V_{t}&=\gamma_3{\rm e}^{-rt}\left[(P_{t}-rP(t))(a+Q(t))^{2}+2P(t)Q_{t}(a+Q(t))\right],\\
	V_{a}&=2\gamma_3{\rm e}^{-rt}P(t)(a+Q(t)),\\
	V_{aa}&=2\gamma_3{\rm e}^{-rt}P(t),
\end{split}	
\end{equation}

Substituting (\ref{eqn:differential}) into (\ref{3.12}), the optimal policies (control variables) can be expressed in terms of the functions $P(t)$, $Q(t)$.We have
\begin{equation}
	\begin{split}
		\gamma_3{\rm e}^{-rt}\big\{&(P_{t}+(r+\frac{\varphi^{2}}{2k-1})P(t)-4g_{1}\gamma_3{\rm e}^{-rt}P(t)^{2})a^{2}\\ 
		&+2((P_{t}+(\frac{\varphi^{2}}{2k-1}-r)P(t)-4g_{1}\gamma_3{\rm e}^{-rt}P(t)^{2})Q(t)+g_{2}P(t)+rP(t)Q(t)+P(t)Q_{t})a\\
		&+(P_{t}+(\frac{\varphi^{2}}{2k-1}-r)P(t)-4g_{1}\gamma_3{\rm e}^{-rt}P(t)^{2})Q(t)^{2}+2g_{2}P(t)Q(t)+2P(t)Q_{t}Q(t)\big\}=0,		
	\end{split}
\end{equation}

The coefficients of $a^2$, $a$ are all zeros, which leads to the following system of differential equations
\begin{equation}\label{PT,QT}
	\begin{split}
		&P_{t}+(r+\frac{\varphi^{2}}{2k-1})P(t)-4g_{1}\gamma_3{\rm e}^{-rt}P(t)^{2}=0,\\
		&2((P_{t}+(\frac{\varphi^{2}}{2k-1}-r)P(t)-4g_{1}\gamma_3{\rm e}^{-rt}P(t)^{2})Q(t)+g_{2}P(t)+rP(t)Q(t)+P(t)Q_{t})=0,	
	\end{split}
\end{equation}
Differential equations (\ref{PT,QT})with boundary conditions in (\ref{eqn:boundary}) can be easily solved. Denote $g_{3}=\frac{\varphi^{2}}{2k-1}$, we have
\begin{equation}
	\begin{split}	
		P(t)&=\frac{1}{R(t)},\\
		Q(t)&=\frac{g_{2}}{r}(1-{\rm e}^{-r(T-t)})-a_{0}{\rm e}^{rt},
	\end{split}
\end{equation}
where
\begin{equation}
	\begin{split}
		R(t)&={\rm e}^{-(r+g_{3})(T-t)}+\frac{4g_{1}\gamma_3}{2r+g_{3}}({\rm e}^{-rt}-{\rm e}^{-(2r+g_{3})T+(r+g_{3})t}).
	\end{split}
\end{equation}
\end{proof}

Above, we have derived closed-form expressions for the optimal
investment strategy and optimal risk-sharing arrangemnts taking into account
the three competing objectives of contibution rate stabilty and benefit stability and continous intergenerational risk-sharing. 

\begin{remark}
According to the closed-form result, it is not so obvious about the impact of the longevity parameter $\omega$. Actually, the $\pi^{*}$,$\lambda_1^{*}$,$\lambda_2^{*}$ is closely ralated to the $\omega$, since the expression of $NC(t)$ and $NB(t)$ contain survival function where $\omega$ as the important longevity parameter being shown in the force of mortality.
\end{remark}

Then, we will interpret the optimal controls presented in (\ref{pi}), (\ref{lambda_1}) and (\ref{lambda_2}), providing economic intuition for some of their more surprising behavior.

First, $Q(t)$ in (\ref{PT,QT}) can be rewrite in actuarial notation as
$$ Q(t)=g_2\overline{a}_{\overline{T-t|}r}-a_0{\rm e}^{rt},$$
where $\overline{a}_{\overline{T-t|}r}$ is the present value of an annuity certain payable continuously for the remainder of the horizon, valued at a force of interest $r$. Besides, $g_2$ in (\ref{g_1 g_2 g_3}) represent the difference between the target total contribution and target total benefit. Therefore the first term in $Q(t)$ is the present value at time $t$ of difference between the target total contribution and target total benefit during the remaining horizon, and the second is the accumulated asset at time $t$ for the initial fund. Now, we can have next remark.

\begin{remark}
The expression for $\pi^{*}$ in (\ref{pi}) can be rewritten as a constant proportion ($\frac{\varphi}{(2k-1)\sigma}$) of the excess or shortfall of its total assets and the present value of the plan's aspirations.
$$\pi^{*}=\frac{\varphi}{(2k-1)\sigma}(a+g_2\overline{a}_{\overline{T-t|}r}-a_0{\rm e}^{rt}),$$
Here total assets include both its financial assets $a(t)$ and the present value of defference between the target total contribution and target total benefit which will collect over the remainder of the horizon, whereas the plan's ``aspirations" reflects the target accumulated assets, with all items being valued at the risk-free rate, which is in line with \cite{Wang2018Optimal}. Consequently, investment in the risky asset is required only when the plan's total assets are not sufficient to cover the plan's aspirations by investing exclusively in the risk-free asset. 
\end{remark}

Next, moving our focus to the optimal risk-sharing adjustment policy. Like $Q(t)$, $P(t)$ can also be rewrite in actuarial notation as
\begin{align}
	\frac{1}{P(t)}=&{\rm e}^{-(\gamma-r)(T-t)}+4g_1\gamma_3{\rm e}^{-rt}\overline{a}_{\overline{T-t|}\gamma}\\
	=&{\rm e}^{-(\gamma-r)(T-t)}+(\frac{\gamma_3}{\gamma_1}NC(t)^{2}+\frac{\gamma_3}{\gamma_2}NB(t)^{2})\overline{a}_{\overline{T-t|}\gamma},
\end{align}
where $\gamma=2r+g_3$ and $\overline{a}_{\overline{T-t|}\gamma}$ is the present value of an annuity certain payable continuously for the remainder of the horizon, valued at a force of interest $\gamma$. Therefore 
$$\frac{\gamma_3}{\gamma_1}NC(t)^{2}P(t)=\frac{1}{\frac{\gamma_1}{\gamma_3NC(t)^{2}}{\rm e}^{-(\gamma-r)(T-t)}+(1+\frac{\gamma_1}{\gamma_2}(\frac{NB(t)}{NC(t)})^{2})\overline{a}_{\overline{T-t|}\gamma}},$$
where the denominator represent the present value of a continuous investment of $(1+\frac{\gamma_1}{\gamma_2}(\frac{NB(t)}{NC(t)})^{2})$ at time $t$ for $T-t$ years, valued at the adjusted rate $\gamma$, while leaving a lump sum at the end of $T-t$ years equal to $\frac{\gamma_1}{\gamma_3NC(t)^{2}}$, valued at the adjusted rate $\gamma-r$. And $\frac{\gamma_3}{\gamma_2}NB(t)^{2}P(t)$ can be showed as same way.

\begin{remark}
The expressions for $\lambda_1^{*}$ and $\lambda_2^{*}$ have similar pattern, and multiplying both side by $NC(t)$($NB(t)$) in \ref{lambda_1}(\ref{lambda_2}), we have
$$NC(t)\lambda_1^{*}=\frac{\gamma_3}{\gamma_1}NC(t)^{2}P(t)(a+Q(t)),$$
$$NB(t)\lambda_2^{*}=\frac{\gamma_3}{\gamma_2}NB(t)^{2}P(t)(a+Q(t)),$$
and we find that the optimal total contribution adjustments and optimal total benefit adjustments at time $t$ are dynamic adjustments that spreads the current excess (shortfall) of the plan's total assets relative to the plan's aspirations over the remaining number of years in the horizon with a proportionate share left over as part of the legacy fund at the end of the horizon.

\end{remark}

\section{Special Cases}
\subsection{Optimal investment problem for a hybrid pension without model uncertainty}
Under the assumptions of no uncertainty, as we mentioned in section 3.1, when $\Psi=0$ in (\ref{J(t,a,w)}) , the manager is completely convinced that the true measure is the reference measure $\mathbb{P}$. In this case, problem (\ref{J(t,a,w)}) degenerates to the classical quadratic deviations minization in the absence of ambiguity. since $\Psi(t)=k/V$, where $k$ is positive constants and stand for the ambiguity aversion parameter, meaning manager's the different levels of ambiguity aversion about the stock’s price, therefore, we can set $k=0$ in problem (\ref{J(t,a,w)}). The following is the classical quadratic deviations minization problem in absence of model uncertainty.              
\begin{equation} 
	\begin{split}
		V(t,a)&=\mathop{\textrm{min}}_{\boldsymbol{\pi}\in \Pi} \bigg\{\int_{t}^{T} {\left[\gamma_1(c(u)-c{\rm e}^{\tau t})^2{\rm e}^{-ru}+\gamma_2(b(u)-b\rm{e}^{\tau t})^2{\rm e}^{-ru}\right]} \textrm{d}u\\
		&\quad \quad \quad \quad \quad \quad \quad \quad +\gamma_3(A(T)-a_0{\rm e}^{rT})^2{\rm e}^{-rT}  \bigg\} 
	\end{split}\label{J(t,a,w)4.1}
\end{equation}

The result of problem (\ref{J(t,a,w)4.1}) is given by following Theorem (\ref{thm:main4.1}), which is derived by set $k=0$ in Theorem \ref{thm:main}.
\begin{theorem} \label{thm:main4.1} \rm  the optimal asset allocation policy, contribution and benefit adjustment policy are given, respectively, by		
	\begin{align}
		\pi^*(t,a)&=-\frac{\varphi}{\sigma}(a+Q(t)), \label{pi4.1} \\
		\lambda_1^*(t,a)&=\frac{\gamma_3}{\gamma_1}NC(t)P(t)(a+Q(t) ), \label{lambda_14.1}  \\
		\lambda_2^*(t,a)&=\frac{\gamma_3}{\gamma_2}NB(t)P(t)(a+Q(t) ),   \label{lambda_24.1}
	\end{align}
	where $NC(t)$ and $NB(t)$ is show as (\ref{NC}) and (\ref{NB}) respectively. and the corresponding value function is given by
	\begin{equation}
		V(t,a)=\gamma_3{\rm e}^{-rt}P(t)(a+Q(t))^{2},
	\end{equation} 
	where the expressions of $P(t)$ and $Q(t)$ are given below. We denote that
	
	\begin{equation}\label{g_1 g_2 g_3,4.1}
		g_{1}=\frac{NC(t)^{2}}{4\gamma_1\rm{e}^{-rt}}+\frac{NB(t)^{2}}{4\gamma_2\rm{e}^{-rt}};\quad \quad 
		g_{2}=NC(t)c{\rm e}^{\tau t}-NB(t)b{\rm e}^{\tau t};\quad \quad
		g_{3}=-\varphi^{2},
	\end{equation} 
	
	then $P(t)$ and $Q(t)$ can be show as 
	\begin{equation}
		\begin{split}	
			P(t)&=\frac{1}{R(t)},\\
			Q(t)&=\frac{g_{2}}{r}(1-{\rm e}^{-r(T-t)})-a_{0}{\rm e}^{rt},
		\end{split}
	\end{equation}
	where
	\begin{equation}
		\begin{split}
			R(t)&={\rm e}^{-(r+g_{3})(T-t)}+\frac{4g_{1}\gamma_3}{2r+g_{3}}({\rm e}^{-rt}-{\rm e}^{-(2r+g_{3})T+(r+g_{3})t}).
		\end{split}
	\end{equation}
\end{theorem}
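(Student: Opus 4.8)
The plan is to show that Theorem~\ref{thm:main4.1} is exactly the $k=0$ (equivalently $\Psi\equiv 0$) specialization of Theorem~\ref{thm:main}, so that every step of the proof of Theorem~\ref{thm:main} transfers after substituting $k=0$. First I would argue that the inner maximization degenerates: when $\Psi(t)=k/V$ with $k=0$, the penalty $\phi^2/(2\Psi)$ in (\ref{J(t,a,w)}) is $+\infty$ for every $\phi\neq 0$, so the supremum over $\mathbb{Q}^{\phi}\in\mathscr{Q}$ is attained uniquely at $\phi^{*}\equiv 0$, i.e.\ $\mathbb{Q}=\mathbb{P}$, and (\ref{J(t,a,w)}) collapses to the ambiguity-free problem (\ref{J(t,a,w)4.1}). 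The same is visible at the level of the HJB equation: the first-order condition (\ref{phi}) gives $\phi^{*}=-k\sigma\pi V_a/V=0$, and substituting this into (\ref{min}) kills the quadratic-in-$\phi$ term, leaving $\min_{\boldsymbol{\pi}\in\Pi}\mathscr{A}^{\boldsymbol{\pi},0}V(t,a)=0$ with terminal data $V(T,a)=\gamma_3{\rm e}^{-rT}(a-a_0{\rm e}^{rT})^2$, which is precisely the dynamic programming equation for (\ref{J(t,a,w)4.1}).

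Next I would carry out the first-order conditions in the remaining controls. Differentiating the bracket of $\min_{\boldsymbol{\pi}}\mathscr{A}^{\boldsymbol{\pi},0}V$ in $\pi,\lambda_1,\lambda_2$ and setting these to zero reproduces (\ref{pi lambda1 lambda2}) at $k=0$, namely $\pi^{*}=-\varphi V_a/(\sigma V_{aa})$, $\lambda_1^{*}=NC(t)V_a/(2\gamma_1{\rm e}^{-rt})$, $\lambda_2^{*}=NB(t)V_a/(2\gamma_2{\rm e}^{-rt})$; the associated minimality condition is simply $V_{aa}>0$, which will be verified a posteriori from $P(t)>0$. Substituting these back yields the semilinear PDE (\ref{3.12}) with $k=0$, namely
\[
V_t+[ar+g_2]V_a-g_1V_a^{2}-\frac{\varphi^{2}V_a^{2}}{2V_{aa}}=0 .
\]
Then I would insert the quadratic ansatz $V(t,a)=\gamma_3{\rm e}^{-rt}P(t)(a+Q(t))^{2}$ with $P(T)=1$, $Q(T)=-a_0{\rm e}^{rT}$, compute $V_t,V_a,V_{aa}$ as in (\ref{eqn:differential}), collect the coefficients of $a^2$ and $a$, and set them to zero. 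This produces the same pair of equations as (\ref{PT,QT}) but with $g_3=\varphi^2/(2k-1)$ evaluated at $k=0$, i.e.\ $g_3=-\varphi^{2}$: a Bernoulli equation $P_t+(r-\varphi^{2})P-4g_1\gamma_3{\rm e}^{-rt}P^{2}=0$, and, after using that equation to eliminate $P_t$, the linear equation $Q_t-rQ+g_2=0$.

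Finally I would solve the two ODEs. The $Q$-equation does not involve $g_3$, so its solution is unchanged: $Q(t)=\frac{g_2}{r}(1-{\rm e}^{-r(T-t)})-a_0{\rm e}^{rt}$. The Bernoulli equation linearizes via $R=1/P$ into $R_t-(r+g_3)R+4g_1\gamma_3{\rm e}^{-rt}=0$ with $R(T)=1$, whose solution (by the integrating-factor method) has the stated form with $g_3=-\varphi^{2}$. Feeding $V$ and $Q$ back into $\pi^{*},\lambda_1^{*},\lambda_2^{*}$ gives (\ref{pi4.1})--(\ref{lambda_24.1}); in particular $\pi^{*}=-\varphi V_a/(\sigma V_{aa})=-(\varphi/\sigma)(a+Q(t))$, which is also the $k=0$ value of $\varphi/((2k-1)\sigma)\,(a+Q(t))$. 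I expect the only delicate point — the main (mild) obstacle — to be the justification of the degeneracy in the first step: arguing rigorously that the penalization forces $\phi^{*}=0$ (rather than merely asserting the $k=0$ substitution), together with the usual verification-theorem step confirming that the classical solution of the reduced HJB equation and the candidate controls $(\pi^{*},\lambda_1^{*},\lambda_2^{*})$ are admissible and optimal (nonnegativity of the adjusted contribution and benefit, square-integrability of $\pi^{*}$, positivity of $R(t)$), all of which hold under the model's standing parameter assumptions.
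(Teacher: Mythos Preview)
Your proposal is correct and matches the paper's approach exactly: the paper obtains Theorem~\ref{thm:main4.1} simply by setting $k=0$ in Theorem~\ref{thm:main}, with no separate argument given. Your write-up is in fact more careful than the paper's, since you explicitly justify why the inner maximization degenerates to $\phi^{*}\equiv 0$ and walk through the resulting HJB reduction and ODEs, whereas the paper treats the substitution as self-evident.
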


\begin{remark}
$g_1$ and $g_2$ are unaffected by uncertainty, and $g_3$ is associated with uncertainty.  It can be seen that the proportion of investment in risky assets decreases with the increase of time and is lower than when there are exist model uncertainty. The contribution is lower than that with model uncertainty, and the retirement benefits are higher than that with model uncertainty. This is mainly because pension managers have confidence in the model and do not have the risk of model uncertainty.  For detailed analysis, see numerical analysis in section 5.  
\end{remark}

\subsection{Optimal investment problem for a hybrid pension without consideration of longevity trend}
Under the assumptions of on longevity trend, we can set $\omega=\infty$ in Assumption 2.1 and $\xi=0$ in Assumption 2.2. Therefore the maximum survival age is $m_0$ which is constant and the survival function $p(x)$ is denoted by
\begin{equation}
p(x)={\rm e}^{-\mathcal{A}(x-x_0)-\frac{\mathcal{B}}{ln\theta} (\theta^{x}-\theta^{x_0})}, \qquad m_0\geq x \ge x_0.\label{p(x)}
\end{equation}
which is as same as the \cite{2019Optimal}. Besides, the optimal investment problem is as same as the (\ref{J(t,a,w)}) which is given by 
\begin{equation} 
	\begin{split}
		V(t,a)&=\mathop{\textrm{min}}_{\boldsymbol{\pi}\in \Pi,\phi}\mathop{\textrm{max}}_{\mathbb{Q}^{\phi} \in \mathscr{Q}} \textrm{E}_{(\boldsymbol{\pi},\phi)}^{\mathbb{Q}} \bigg\{\int_{t}^{T} {\left[\gamma_1(c(u)-c{\rm e}^{\tau t})^2{\rm e}^{-ru}+\gamma_2(b(u)-b{\rm e}^{\tau t})^2{\rm e}^{-ru}\right]} \textrm{d}u\\
		&\quad \quad \quad \quad \quad \quad \quad \quad +\gamma_3(A(T)-a_0{\rm e}^{rT})^2{\rm e}^{-rT}+	\int_{t}^{T}{\frac{(\phi(u))^2}{2\Psi(u)}}   \textrm{d}u \bigg\},  
	\end{split}
\end{equation}

The result is given by following Theorem \ref{thm:main4.2}, which is derived by set $\omega=\infty$ and $\xi=0$ in Theorem \ref{thm:main}.
\begin{theorem} \label{thm:main4.2} \rm The optimal asset allocation policy, contribution and benefit adjustment policy are given, respectively, by		
	\begin{align}
		\phi^{*}&=-\frac{k\varphi}{2(2k-1)},\\
		\pi^*(t,a)&=\frac{\varphi}{(2k-1)\sigma}(a+Q(t)), \label{pi4.2} \\
		\lambda_1^*(t,a)&=\frac{\gamma_3}{\gamma_1}NC(t)P(t)(a+Q(t) ), \label{lambda_14.2}  \\
		\lambda_2^*(t,a)&=\frac{\gamma_3}{\gamma_2}NB(t)P(t)(a+Q(t) ),   \label{lambda_24.2}
	\end{align}
	where $NC(t)$ and $NB(t)$ is show as
	 \begin{equation}
		NC(t)=\int_{x_0}^{x_r} {n(t-(x-x_0))p(x)} \textrm{d}x, \label{NC4.2}
	\end{equation}
	and the total number of retired members who recieve the benefits at time $t$ is
	\begin{equation}
		NB(t)=\int_{x_r}^{m_0} {n(t-(x-x_0))p(x)} \textrm{d}x. \label{NB4.2}
	\end{equation}
where the $p(x)$ is given by (\ref{p(x)}).

 The corresponding value function is given by
	\begin{equation}
		V(t,a)=\gamma_3{\rm e}^{-rt}P(t)(a+Q(t))^{2},
	\end{equation} 
	where the expressions of $P(t)$ and $Q(t)$ are given below. We denote that
	
	\begin{equation}\label{g_1 g_2 g_3.4.2}
		g_{1}=\frac{NC(t)^{2}}{4\gamma_1{\rm e}^{-rt}}+\frac{NB(t)^{2}}{4\gamma_2{\rm e}^{-rt}};\quad \quad 
		g_{2}=NC(t)c{\rm e}^{\tau t}-NB(t)b{\rm e}^{\tau t};\quad \quad
		g_{3}=\frac{\varphi^{2}}{2k-1},
	\end{equation} 
	
	then $P(t)$ and $Q(t)$ can be show as 
	\begin{equation}
		\begin{split}	
			P(t)&=\frac{1}{R(t)},\\
			Q(t)&=\frac{g_{2}}{r}(1-{\rm e}^{-r(T-t)})-a_{0}{\rm e}^{rt},
		\end{split}
	\end{equation}
	where
	\begin{equation}
		\begin{split}
			R(t)&={\rm e}^{-(r+g_{3})(T-t)}+\frac{4g_{1}\gamma_3}{2r+g_{3}}({\rm e}^{-rt}-{\rm e}^{-(2r+g_{3})T+(r+g_{3})t}).
		\end{split}
	\end{equation}
\end{theorem}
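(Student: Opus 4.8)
The plan is to obtain Theorem~\ref{thm:main4.2} as a direct specialization of Theorem~\ref{thm:main}: I would verify that the parameter choices $\omega=\infty$ in Assumption~2.1 and $\xi=0$ in Assumption~2.2 affect the optimization problem only through the two deterministic population functions $NC(t)$ and $NB(t)$, so that every step in the proof of Theorem~\ref{thm:main} carries over once these two functions are replaced by their reduced forms.

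First I would record the demographic simplification. Letting $\omega\to\infty$ in $\mu(x,t)=\mathcal{A}+\mathcal{B}\,\theta^{x-\frac1\omega t}$ eliminates the cohort-time index, so $\mu(x,t)\to\mathcal{A}+\mathcal{B}\,\theta^{x}$ depends on age alone; setting $\xi=0$ in $m(t)=m_0+\xi t$ freezes the maximum age at the constant $m_0$. Substituting these into the age- and time-dependent survival function derived in Section~2 collapses it to $p(x)$ of (\ref{p(x)}), whence the active and retired head counts reduce to $NC(t)$ and $NB(t)$ as in (\ref{NC4.2})--(\ref{NB4.2}). The point is that the asset dynamics (\ref{asset}), the cost functional (\ref{J(t,a,w)}), the variational operator $\mathscr{A}^{\boldsymbol{\pi},\phi}$ and the HJB equation (\ref{eqn:HJB}) depend on the population model exclusively through $NC(t)$ and $NB(t)$; no step in the derivation of Theorem~\ref{thm:main} uses time-dependence of the survival function or time-variation of the maximum age beyond what is already packaged into these two functions.

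Next I would re-run the argument of Theorem~\ref{thm:main} with the reduced $NC(t),NB(t)$. The first-order conditions in $\phi,\pi,\lambda_1,\lambda_2$ give $\phi^{*}=-k\sigma\pi V_a/V$ as in (\ref{phi}) and the expressions (\ref{pi lambda1 lambda2}) in terms of $V_a,V_{aa},V$; inserting them into (\ref{eqn:HJB}) yields the reduced PDE (\ref{3.12}) with $g_1,g_2$ as in (\ref{g_1 g_2 g_3.4.2}). The quadratic ansatz $V(t,a)=\gamma_3{\rm e}^{-rt}P(t)(a+Q(t))^{2}$ with $P(T)=1$, $Q(T)=-a_0{\rm e}^{rT}$ then separates (\ref{3.12}) into the coefficient system (\ref{PT,QT}) for $P$ and $Q$; solving it with $g_3=\varphi^{2}/(2k-1)$ gives the stated $P(t)$, $Q(t)$, $R(t)$, and back-substitution recovers $\phi^{*},\pi^{*},\lambda_1^{*},\lambda_2^{*}$ in the claimed closed form.

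The only genuine checks --- and where some care is needed --- are (i) that the $\omega\to\infty$ limit is taken consistently, so that it reproduces exactly the survival function (\ref{p(x)}) and not merely its leading behavior, and (ii) that the verification step underlying Theorem~\ref{thm:main} still applies: one must confirm that the candidate $V$ is $C^{1,2}$ on $[0,T]\times\mathbb{R}$, that the associated strategy is admissible in the sense of the Admissible Strategy definition (which uses only boundedness of $NC(t)$ and $NB(t)$ on $[0,T]$, and is unaffected by the reduction), and that the standard localization and martingale argument closes the gap between the HJB solution and the value function. I expect this verification bookkeeping, rather than any new computation, to be the main obstacle; the rest is an immediate transcription of the proof of Theorem~\ref{thm:main}.
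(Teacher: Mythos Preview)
Your proposal is correct and matches the paper's approach exactly: the paper does not give a separate proof of Theorem~\ref{thm:main4.2} but simply states that it ``is derived by set $\omega=\infty$ and $\xi=0$ in Theorem~\ref{thm:main},'' which is precisely the specialization argument you outline. If anything, your sketch is more careful than the paper, since you explicitly isolate that the demographic parameters enter only through $NC(t)$ and $NB(t)$ and flag the verification bookkeeping; the paper takes all of this for granted.
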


\begin{remark}
$g_1$ and $g_2$ were affected by the longevity trend, while $g_3$ was not. It can be seen that the proportion of investment in risky assets is lower than that with longevity trend, the optimal contribution is lower than that with longevity trend, and the optimal retirement benefits are higher than that with longevity trend. The main reason is that without the influence of longevity trend, the pension system has no aging problem, while the actual pension system has the influence of longevity trend.  Therefore, pension systems with longevity trends must be considered. For detailed analysis, see numerical analysis in section 5.  
\end{remark}

\subsection{Optimal investment problem for a hybrid pension without model uncertainty and consideration of longevity}

Under the assumptions of without uncertainty and longevity trend, we set $k=0$, $\omega=\infty$, $\xi=0$. The optimal investment problem is as same as (\ref{J(t,a,w)4.1}) and is given by
\begin{equation} 
	\begin{split}
		V(t,a)&=\mathop{\textrm{min}}_{\boldsymbol{\pi}\in \Pi} \bigg\{\int_{t}^{T} {\left[\gamma_1(c(u)-c{\rm e}^{\tau t})^2{\rm e}^{-ru}+\gamma_2(b(u)-b\rm{e}^{\tau t})^2{\rm e}^{-ru}\right]} \textrm{d}u\\
		&\quad \quad \quad \quad \quad \quad \quad \quad +\gamma_3(A(T)-a_0\rm{e}^{rT})^2\rm{e}^{-rT}  \bigg\} 
	\end{split}
\end{equation}

The result is given by following Theorem \ref{thm:main4.3},  which is derived by set $k=0$,$\omega=\infty$ and $\xi=0$ in Theorem \ref{thm:main}.
\begin{theorem} \label{thm:main4.3} \rm  the optimal asset allocation policy, contribution and benefit adjustment policy are given, respectively, by
	\begin{align}
		\pi^*(t,a)&=-\frac{\varphi}{\sigma}(a+Q(t)),  \\
		\lambda_1^*(t,a)&=\frac{\gamma_3}{\gamma_1}NC(t)P(t)(a+Q(t) ),   \\
		\lambda_2^*(t,a)&=\frac{\gamma_3}{\gamma_2}NB(t)P(t)(a+Q(t) ),   
	\end{align}
	where $NC(t)$ and $NB(t)$ is show as (\ref{NC4.2}) and (\ref{NB4.2}) respectively. and the corresponding value function is given by
	\begin{equation}
		V(t,a)=\gamma_3{\rm e}^{-rt}P(t)(a+Q(t))^{2},
	\end{equation} 
	where the expressions of $P(t)$ and $Q(t)$ are given below. We denote that
	
	\begin{equation}\label{g_1 g_2 g_3,4.3}
		g_{1}=\frac{NC(t)^{2}}{4\gamma_1{\rm e}^{-rt}}+\frac{NB(t)^{2}}{4\gamma_2{\rm e}^{-rt}};\quad \quad 
		g_{2}=NC(t)c{\rm e}^{\tau t}-NB(t)b{\rm e}^{\tau t};\quad \quad
		g_{3}=-\varphi^{2},
	\end{equation} 
	
	then $P(t)$ and $Q(t)$ can be show as 
	\begin{equation}
		\begin{split}	
			P(t)&=\frac{1}{R(t)},\\
			Q(t)&=\frac{g_{2}}{r}(1-{\rm e}^{-r(T-t)})-a_{0}{\rm e}^{rt},
		\end{split}
	\end{equation}
	where
	\begin{equation}
		\begin{split}
			R(t)&={\rm e}^{-(r+g_{3})(T-t)}+\frac{4g_{1}\gamma_3}{2r+g_{3}}({\rm e}^{-rt}-{\rm e}^{-(2r+g_{3})T+(r+g_{3})t}).
		\end{split}
	\end{equation}
\end{theorem}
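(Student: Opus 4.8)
The plan is to obtain Theorem~\ref{thm:main4.3} as a direct specialization of Theorem~\ref{thm:main}, exactly as the statement advertises, checking along the way that none of the substitutions $k=0$, $\omega=\infty$, $\xi=0$ produces a genuine singularity in the closed-form expressions. I would split the verification into the ambiguity part and the longevity part.

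For the ambiguity part, setting $k=0$ makes $\Psi(t)=k/V\equiv 0$, so the penalty $\int_{t}^{T}\phi(u)^2/(2\Psi(u))\,\textrm{d}u$ is infinite unless $\phi\equiv 0$; hence the inner maximization in (\ref{J(t,a,w)}) is attained at the reference measure and (\ref{J(t,a,w)}) collapses to the no-ambiguity problem (\ref{J(t,a,w)4.1}). This is consistent with the closed form: the first-order condition (\ref{phi}) gives $\phi^{*}=-k\sigma\pi V_a/V=0$ at $k=0$, and since $2k-1=-1\neq 0$ there, every expression in Theorem~\ref{thm:main} stays well defined and evaluates to $\pi^{*}=-\frac{\varphi}{\sigma}(a+Q(t))$ and $g_3=\varphi^2/(2k-1)=-\varphi^{2}$, as in (\ref{g_1 g_2 g_3,4.3}). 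In the HJB computation of the proof of Theorem~\ref{thm:main} this is harmless: with $k=0$ the term $-k\sigma^2\pi^2V_a^2/(2V)$ drops, (\ref{pi lambda1 lambda2}) becomes $\pi^{*}=-\varphi V_a/(\sigma V_{aa})$ (well defined since $V_{aa}=2\gamma_3{\rm e}^{-rt}P(t)>0$), and the rest of the argument is unchanged.

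For the longevity part, $\omega=\infty$ in Assumption~2.1 removes the cohort-time index, leaving $\mu(x)=\mathcal{A}+\mathcal{B}\theta^{x}$, and $\xi=0$ in Assumption~2.2 makes the maximum age the constant $m_0$. Letting $\omega\to\infty$ in the displayed survival function (where $t/\omega\to 0$, $(1-1/\omega)x_0\to x_0$, $(x-t)/\omega\to 0$ uniformly for $x$ in the compact range) collapses $p(x,t)$ to $p(x)={\rm e}^{-\mathcal{A}(x-x_0)-\frac{\mathcal{B}}{\ln\theta}(\theta^{x}-\theta^{x_0})}$, i.e.\ (\ref{p(x)}). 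Consequently the aggregates (\ref{NC}), (\ref{NB}) become the time-homogeneous-integrand expressions (\ref{NC4.2}), (\ref{NB4.2}) with upper limit $m_0$; since $g_1$, $g_2$ depend only on $NC(t)$, $NB(t)$ and exogenous constants, they inherit this simplification automatically, while the ODE system (\ref{PT,QT}) and the boundary data $P(T)=1$, $Q(T)=-a_0{\rm e}^{rT}$ keep the same form, so the solved $P(t)$, $Q(t)$, $R(t)$ carry over verbatim with $g_3=-\varphi^{2}$. Equivalently -- and more economically -- one may simply note that Theorem~\ref{thm:main4.1} is Theorem~\ref{thm:main} with $k=0$ and Theorem~\ref{thm:main4.2} is Theorem~\ref{thm:main} with $\omega=\infty$, $\xi=0$; since these act on disjoint parameter groups, applying both at once yields Theorem~\ref{thm:main4.3}.

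The only point deserving care -- the ``hard part,'' modest as it is -- is justifying that the limit $\omega\to\infty$ commutes with the integrals defining $NC(t)$, $NB(t)$ and with the solution map of the Bernoulli-type equation for $P$ (linearized by $P=1/R$) and the linear equation for $Q$, i.e.\ that one may substitute the limiting survival function before solving rather than after. This is immediate from dominated convergence on the compact age interval $[x_0,m_0]$ (continuous, uniformly bounded integrand) together with continuous dependence of ODE solutions on parameters, since $g_1,g_2$ depend smoothly on $NC(t),NB(t)$. Everything else is the verbatim HJB computation already carried out for Theorem~\ref{thm:main} with the specialized constants inserted, which I would not reproduce.
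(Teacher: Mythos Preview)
Your proposal is correct and follows exactly the paper's own approach: the paper's entire ``proof'' of Theorem~\ref{thm:main4.3} is the one-line statement that it is derived by setting $k=0$, $\omega=\infty$ and $\xi=0$ in Theorem~\ref{thm:main}. If anything, you are more thorough than the paper, since you actually check that $2k-1\neq 0$ at $k=0$, that $\phi^{*}=0$ is forced, and that the $\omega\to\infty$ limit commutes with the integrals and ODE solutions, none of which the paper mentions.
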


\begin{remark}
Regardless of model uncertainty and longevity trends, this study is very similar to the work of \cite{2020Optimal} and \cite{2019Optimal}, and the results are similar.  Therefore, the results of this paper include the research results of \cite{2020Optimal} and \cite{2019Optimal}.
\end{remark}

\section{Numerical illustrations}
In section 3, we have establish the explicit expression of the robust optimal investiment allocation and adjustment strategies of both the contribution and the benefit for per cohort in the hybrid pension model. In this section, we present numerical examples to illustrate our results regarding optimal policies.

The parameter settings in numercial simulation are mainly in line with the ones in literature \cite{2019Optimal}. In the demography aspect, we assume a negative growth rate of new labor entrants to study the impacts of aging problem ($\kappa=-0.005$), which depicts the scenario of negative growth of labor supply. The assumption is far removed from current reality but might emerge soon. For the age and time-dependent survival function, set $\theta=1.124$, $\mathcal{A}=0.000022$, $\mathcal{B}=2.7 \time 10^{-6}$. Besides, we set $\omega=4$, which means that every 4 years, the average life expectancy increases by one year.
\begin{table}[H]
	\centering
	\caption{parameter settings in the numercial simulation}
	\label{Table 1}
	\begin{tabular}{ccccccccccc}		
		\toprule
		$\kappa$ &  $\theta$  &$\mathcal{A}$&$\mathcal{B}$&$\omega$ & $x_0$ & $x_r$& $n_0$& $a_0$&$T$&$m_0$\\
		-0.005& 1.124 &0.000022& $2.7\times 10^{-6}$ &4&25&65&10&3000&20&100\\
		\midrule
		 $c$& $b$&$\tau$& $\gamma_1$& $\gamma_2$&$\gamma_3$ & $r$ & $\mu$ & $\sigma$ & $k$&$\xi$\\
		0.1&0.7&0.02&2&2&2&0.01&0.05&0.15 & 2&0.25\\  
		\bottomrule	 
	\end{tabular}
\end{table}	
In the pension rules aspect, the age of the new entrants into the pension system is $x_0=25$, and the age of retirement is $x_r=65$. The number of new entrants at time 0 is $ n_0=10$. The initial fund accumulation is $a_0=3000$. The time horizon is $T=20$. Futhermore, the target contribution is $c=0.1$, and target benefit is $b=0.7$.  And the target growth at detemnistic rate $\tau=0.02$. We assume that the weight parameter $\gamma_1=2,\gamma_2=2,\gamma_3=2$, and we will change the vaule and analyse the impact.

In the finacial market aspect, the risk-free interest rate is $r=0.01$. The expected return and the volatility of the risky asset is $\mu=0.05$ and $\sigma=0.15$, respectively. As such, the sharpe ratio is $\varphi=0.2667$. At last, the ambiguity aversion parameter $k=2$.

In the following part, we use Monte Carlo method to do the numerical analysis. We first partition the time interval $[0,20]$ into 1000 subintervals, For each of these subintervals, we generate a random variable with normal distribution, which applied to the common random source of risky asset. Based on the salary level and risky asset price, as well as the optimal control policies obtained in section 3, we could get the new fund accumulation at the end of the subinterval. Then, we recalculate the optimal contribution, the optimal benefit and the optimal asset allocation amount based on the new accumulation, using (\ref{c(t)}), (\ref{b(t)}), (\ref{pi}), (\ref{lambda_1}), (\ref{lambda_2}), following the above instructions, the procedure is repeated in the next subinterval using the newly determined value of $\pi^{*},c^{*},b^{*}$, yielding 1000 consecutive values of $\pi^{*},c^{*},b^{*}$ during the time interval $[0,20]$. By repeating this process 1000 times, we obtain the empirical distribution of $\pi^{*},c^{*},b^{*}$ at the end of each subinterval, and the average of the optimal contribution rate $\mathbb{E}c^{*}(t)$, the average of the optimal benefit $\mathbb{E}b^{*}(t)$ and the average of the optimal risky investment proportion $\mathbb{E}(\frac{\pi^{*}(t)}{A^{*}(t)})$ are used by the follow-up analysis.

According to the parameter settings in the Table \ref{Table 1}. Figure \ref{alpha&beta} gives the information about the spread parameter for rcontribution income and benefit payments $\alpha$, $\beta$ in the hybrid pension system. Both the $\alpha$ and $\beta$ have similar trend. As we mentioned in section 2, the lower vaule of $\alpha$ and $\beta$, the higher degree of intergenerational risk sharing. As such, the pension system gradually decreases the part of funding surplus which shifted to the future, i.e, decreases the degree of intergenerational risk sharing.
\begin{figure} 
	\centering
	\subfigure[]{\includegraphics[scale=0.3]{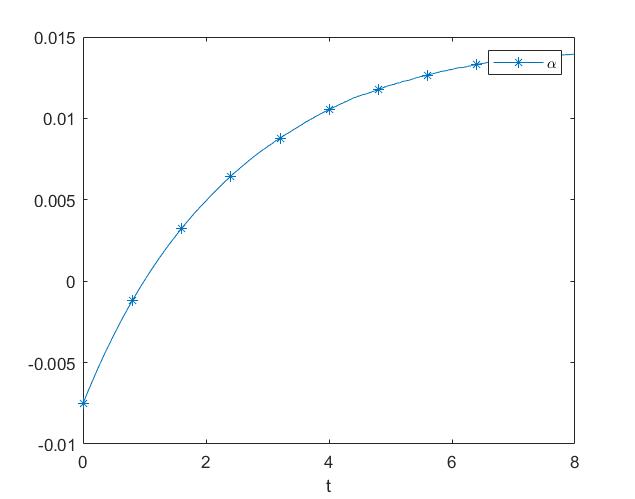}}
    \subfigure[]{\includegraphics[scale=0.3]{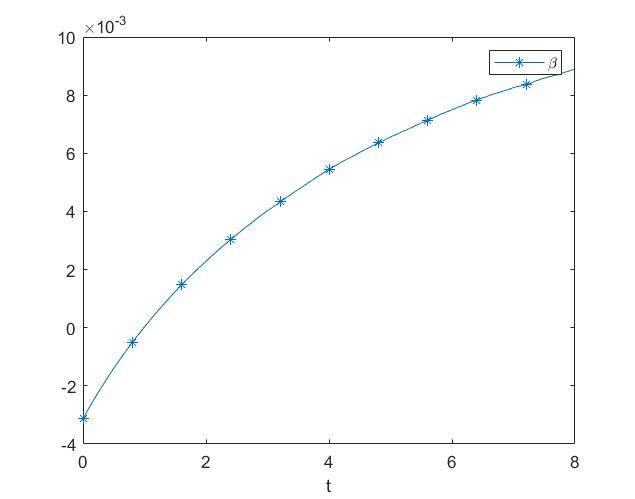}}
	\caption{The spread parameter for contribution income and benefit payment $\alpha$,  $\beta$ according to time}
	\label{alpha&beta} 
\end{figure}

In the first part of sensitivity analysis, we study the impacts of demography trends and time interval on the optimal control policies with respect to time $t$.

First, we investigate the impacts of new entrants growth rate $\kappa$ on the optimal polices for the hybrid pension model. In Figure \ref{c&b_kappa}, when the new entrants growth rate is decreases, the dependency ratio increases and it leads to higher deficit of the pension budget. As such, the contribution substantially increases and benefit decrease at the same time. In Figure \ref{pi_kappa&k}, when the new entrants growth rate decreases, the proportion allocated to the risky asset increases. The pension manager needs to take more investment risk to re-balance the pension budget. Besides, we notice that when $\kappa=0$ which means that the number of population entering the pension plan system is $n_0$ every year and the population stays relatively stable during this 20 years. In that case, In the first two years, the contribution and optimal investment proportion are negative since there are no much pressure on finacial at the beginning.


\begin{figure} 
	\centering
	\subfigure[]{\includegraphics[scale=0.3]{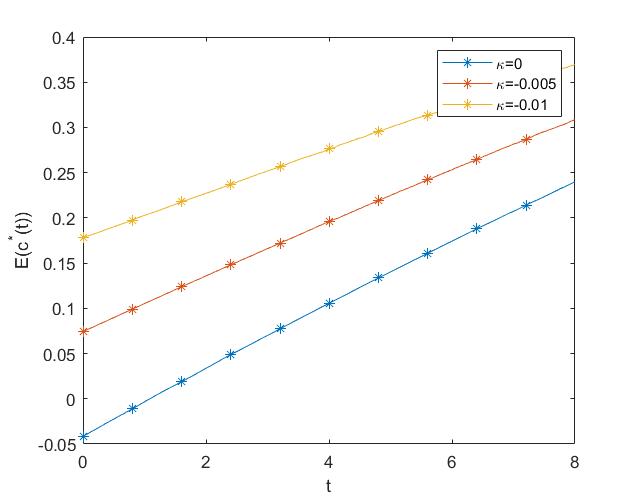}}
	\subfigure[]{\includegraphics[scale=0.3]{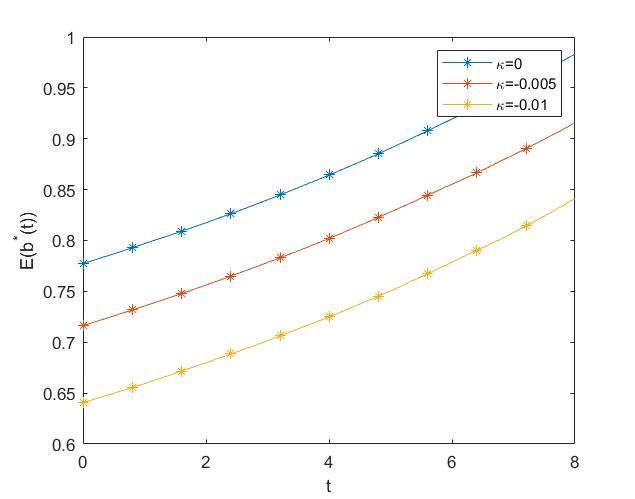}}
	\caption{The impacts of new entrants growth rate $\kappa$ on optimal contribution $\mathbb{E}c^{*}(t)$ and optimal benefit $\mathbb{E}b^{*}(t)$}  \label{c&b_kappa}
\end{figure}
\begin{figure}[H] 
    \centering
    \includegraphics[scale=0.3]{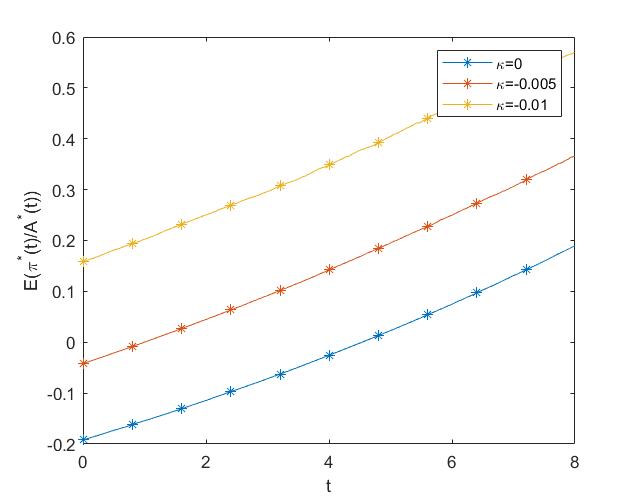}
	\caption{The impacts of new entrants growth rate $\kappa$  on optimal investment proportion $\mathbb{E}(\frac{\pi^{*}(t)}{A^{*}(t)})$ } \label{pi_kappa&k}
\end{figure}


\begin{figure} [H]
	\centering
	\subfigure[]{\includegraphics[scale=0.3]{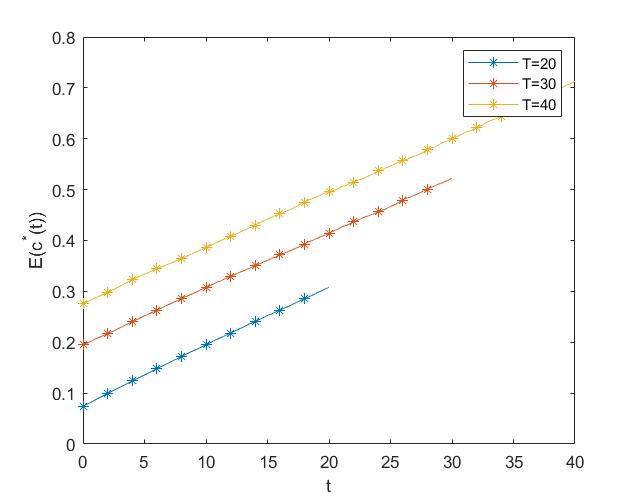}}
	\subfigure[]{\includegraphics[scale=0.3]{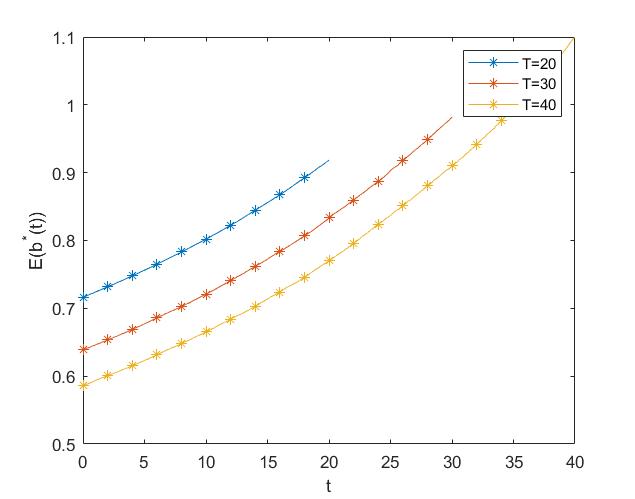}}
	\caption{The impacts of time interval $T$ on optimal contribution $\mathbb{E}c^{*}(t)$ and optimal benefit $\mathbb{E}b^{*}(t)$}  \label{c&b_T}
\end{figure}

Then, we investigate the impact of time interval $T$ on optimal policies. In Figure \ref{c&b_T}, long time interval leads to higher contribution and lower benefit since there are more uncertainty in the remote future and we have to put sufficent money into pension system to meet the budget. In Figure \ref{pi_T&xr_omega} (a), when time interval we considered is relatively long, there are more proportion put into risky asset since pension system needs more money to keep sustainable. 
\begin{figure}[H] 
 \centering
 \includegraphics[scale=0.3]{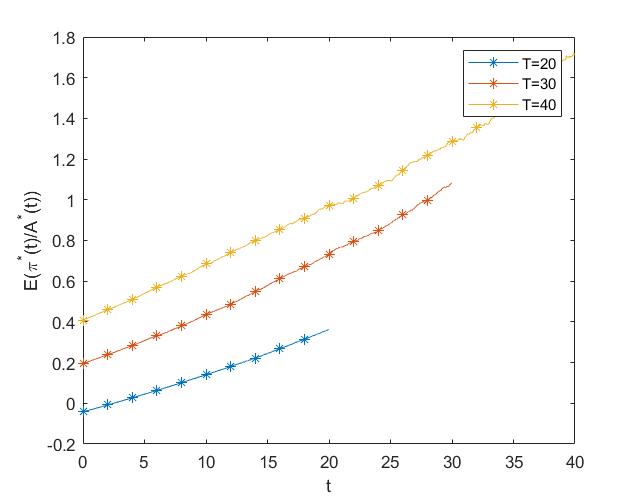}
	
	\caption{The impacts of time interval $T$ on optimal investment proportion $\mathbb{E}(\frac{\pi^{*}(t)}{A^{*}(t)})$ } \label{pi_T&xr_omega}
\end{figure}

In the second part of sensitivity analysis, we study the cross impacts of two dimensional parameters on the optimal policies at time 10.

First, we investigate the impacts of retirement age $x_r$ and longevity parameter $\omega$ on optimal policies. In Figure \ref{c&b_xr_omega} and Figure \ref{pi_xr_kappa} (a). Lower $\omega$ means the aging problem is relatively urgent, which leads to higher contribution, higher risky asset proportion and lower benefit. That is because there will be more retired people in our system, higher contribution and risky asset proportion can increase the asset so that keep our pension system stable and sustainable. At the same time, if we can postponing retirement age, the risky asset proportion and contribution will slightly decrease and the benefit will increase, since there are more working cohort in pension system and sufficient accumulations as the benefit payment. This can alleviate the contribution burden and reduces both of the risk-taking and discontinutity risk in some aspects. Therefore, according to the change of longevity parameter $\omega$, we can choose the appropriate retirement age to make sure the pension system run smoothly.

\begin{figure}[H] 
	\subfigure[]{\includegraphics[scale=0.3]{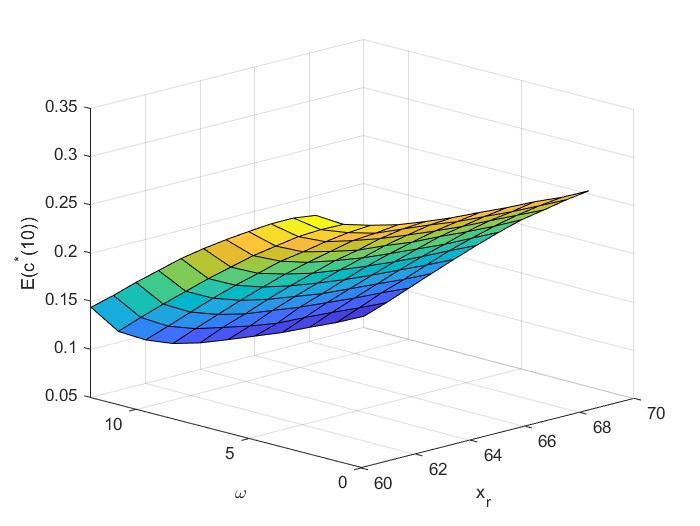}}
	\subfigure[]{\includegraphics[scale=0.3]{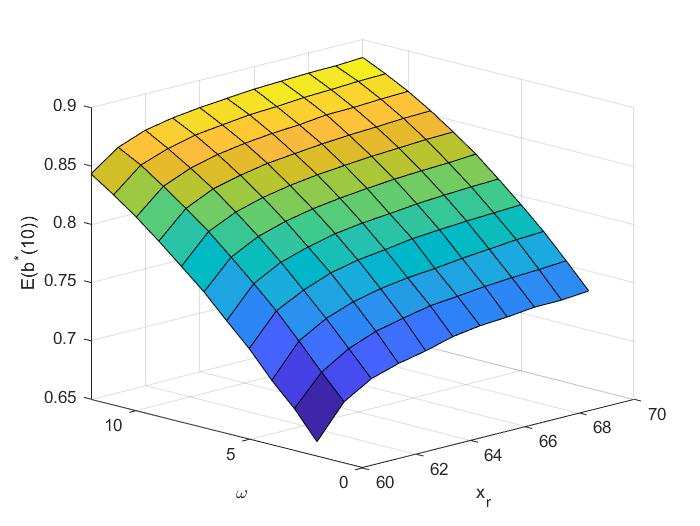}}
	\caption{The impacts of retirement age $x_r$ and $\omega$  on optimal contribution  $\mathbb{E}c^{*}(t)$ and optimal benefit $\mathbb{E}b^{*}(t)$}  \label{c&b_xr_omega}
\end{figure}
\begin{figure}[H] 
	\centering 
	\subfigure[]{\includegraphics[scale=0.3]{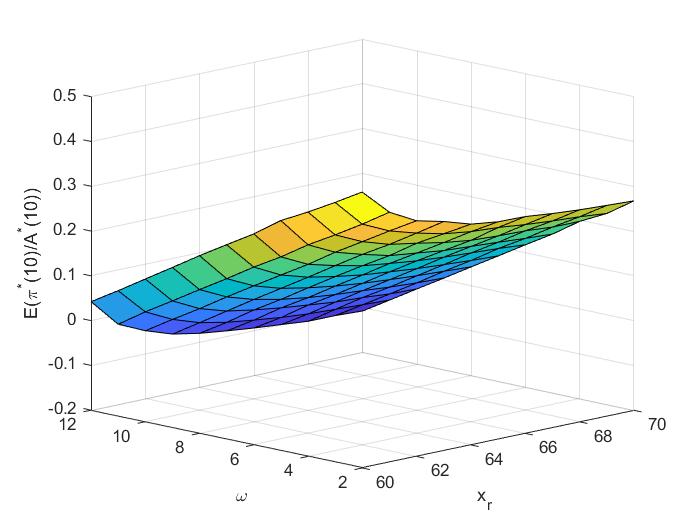}}	
	\subfigure[]{\includegraphics[scale=0.3]{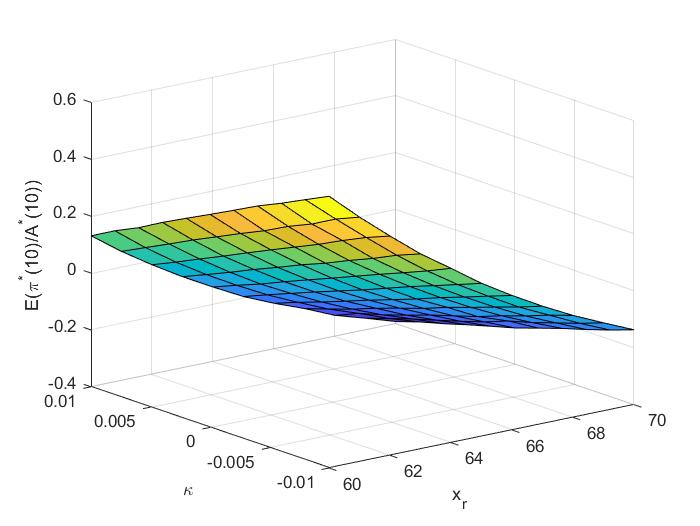}}
	\caption{The impacts of retirement age $x_r$ and $\omega$, retirement age $x_r$ and new entrants growth rate $\kappa$ on optimal investment proportion $\mathbb{E}(\frac{\pi^{*}(t)}{A^{*}(t)})$}
	\label{pi_xr_kappa} 
\end{figure}

Second, we investigate the impacts of retirement age $x_r$ and new entrants growth rate $\kappa$ on optimal policies. In Figure \ref{pi_xr_kappa}(b) and Figure \ref{c&b_xr_kappa}, when the new entrants growth rate is low and people retire early, the contribution rate and the risky investment proportion are both high. Meanwhile, we find that postponing the retirement age from 60 to 70 could offset the negative impact of decreasing new entrants growth rate from 0.01 to -0.01. Therefore, postponing the retirement is an important measure to alleviate the adverse influence of the aging problem. This conlusion is in line with the \cite{2020Optimal}.

\begin{figure} 
	\centering
	\subfigure[]{\includegraphics[scale=0.3]{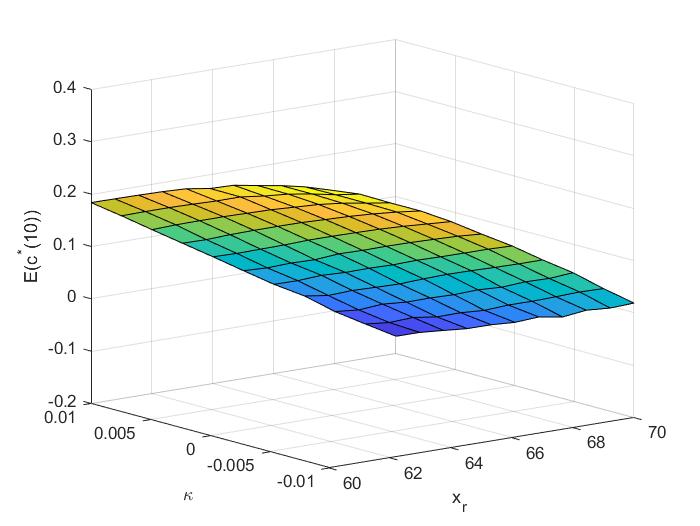}}
	\subfigure[]{\includegraphics[scale=0.3]{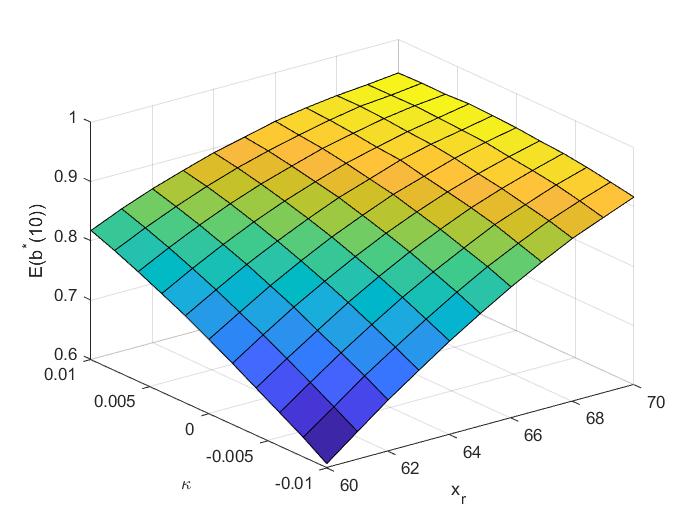}}
	\caption{The impacts of retirement age $x_r$ and new entrants growth rate $\kappa$  on optimal contribution  $\mathbb{E}c^{*}(t)$ and optimal benefit $\mathbb{E}b^{*}(t)$}  \label{c&b_xr_kappa}
\end{figure}

At last, we investigate the impacts of the weight parameter of unstable contribution $\gamma_1$ and benefit risk $\gamma_2$ on optimal policies. In Figure \ref{c&b_gamma1_gamma2}, when $\gamma_1$ is high and $\gamma_2$ is low, which means we are more care about the unstable contribution risk instead of unstable benefit risk, both the contribution and benefit are high. This shows that as long as keeping the contribution stable and adequate, the benefit is natually can be statisfied.

\begin{figure} [H]
	\centering
	\subfigure[]{\includegraphics[scale=0.25]{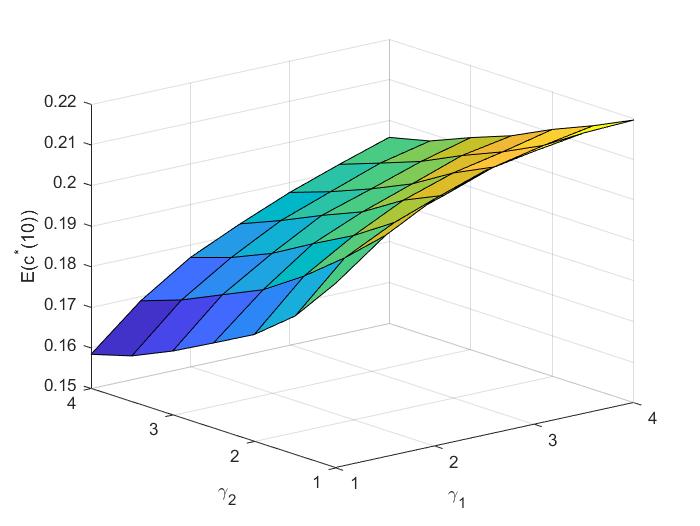}}
	\subfigure[]{\includegraphics[scale=0.25]{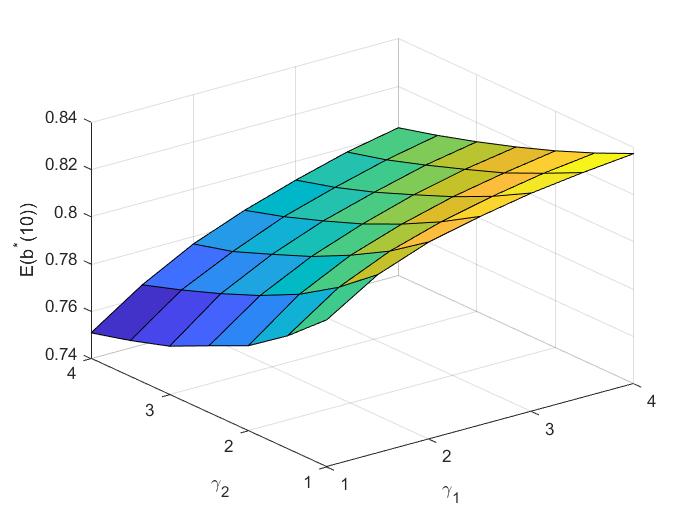}}
	\caption{The impacts of $\gamma_1$ and $\gamma_2$ on optimal contribution $\mathbb{E}c^{*}(t)$ and optimal benefit $\mathbb{E}b^{*}(t)$}  \label{c&b_gamma1_gamma2}
\end{figure}

In the third part of sensitivity analysis, we study the special cases that we discussed in Section 4. 

First, under the assumptions of no uncertainty, Figure \ref{c&b_k} and \ref{pi_k} is the numercial analysis compared with the $k=2$ and $k=4$. When there is no model uncertainty, the contrubtion is relatively low and the benefit is raltively high. Besides, the investment proportion has a total different pattern. That is because  the pension manager has faith to the true model and no risk of model uncertainty. 
\begin{figure} [H]
	\centering
	\subfigure[]{\includegraphics[scale=0.28]{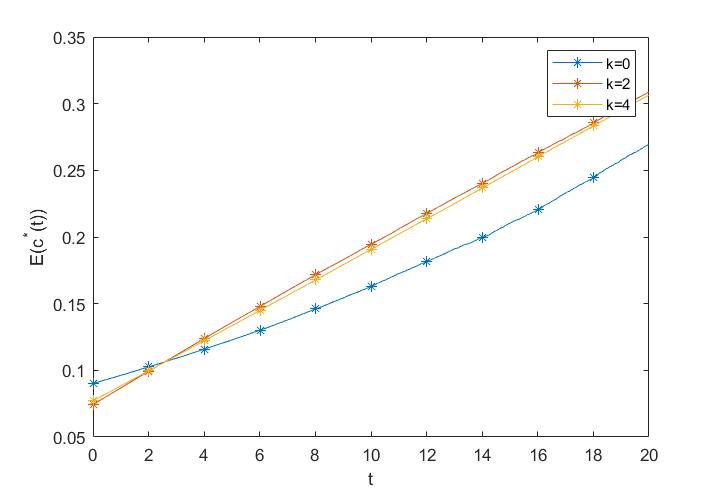}}
	\subfigure[]{\includegraphics[scale=0.28]{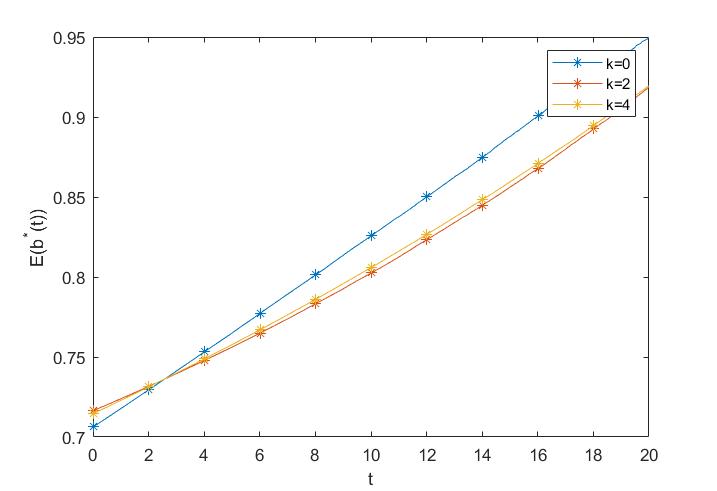}}
	\caption{The impacts of uncertainty on optimal contribution $\mathbb{E}c^{*}(t)$ and optimal benefit $\mathbb{E}b^{*}(t)$}  \label{c&b_k}
\end{figure}

\begin{figure}[H] 
	\centering
	\includegraphics[scale=0.3]{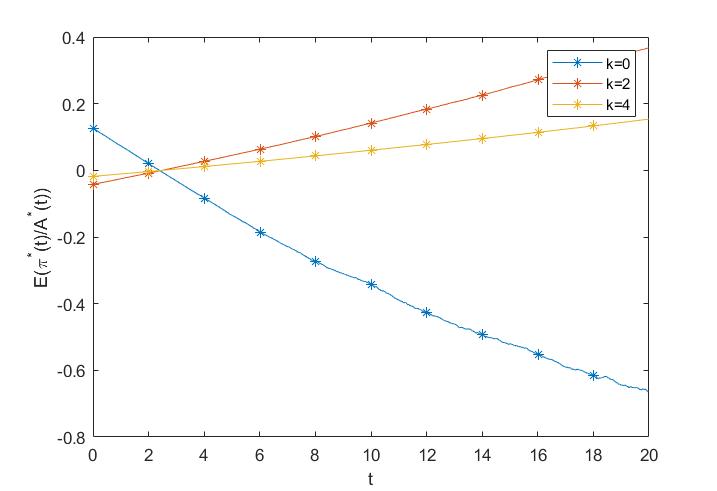}
	
	\caption{The impacts of  uncertainty on optimal investment proportion $\mathbb{E}(\frac{\pi^{*}(t)}{A^{*}(t)})$ } \label{pi_k}
\end{figure}

Then, under the assumptions of on longevity trend, Figure \ref{c&b_longevity} and \ref{pi_longevity} is the numercial analysis compared with the model with longevity trend. When there is no longevity trend, the contrubtion and investment proportion is relatively low and the benefit is raltively high. Without longevity trend, there are not too much pressures on the pension plan, therefore, it allows that a few proportion of assets invested to the finicial market. 

\begin{figure} [H]
	\centering
	\subfigure[]{\includegraphics[scale=0.3]{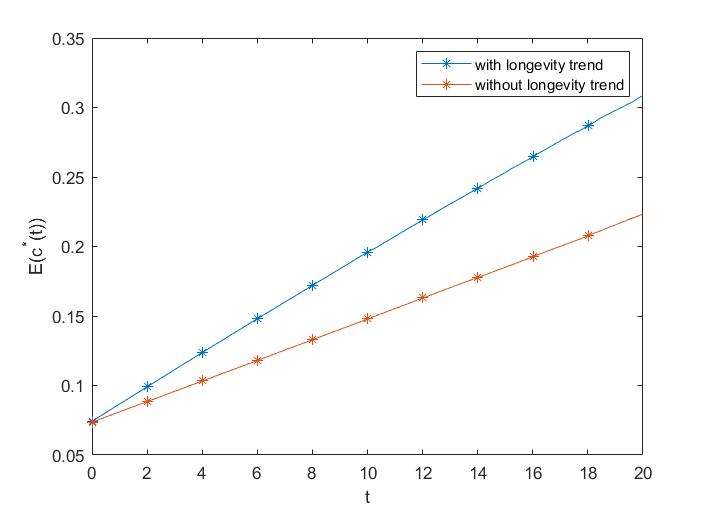}}
	\subfigure[]{\includegraphics[scale=0.3]{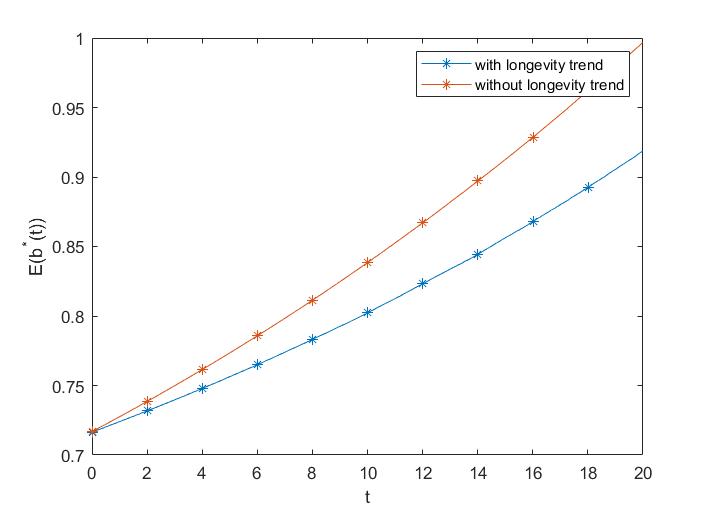}}
	\caption{The impacts of longevity trend on optimal contribution $\mathbb{E}c^{*}(t)$ and optimal benefit $\mathbb{E}b^{*}(t)$}  \label{c&b_longevity}
\end{figure}

\begin{figure}[H] 
	\centering
	\includegraphics[scale=0.3]{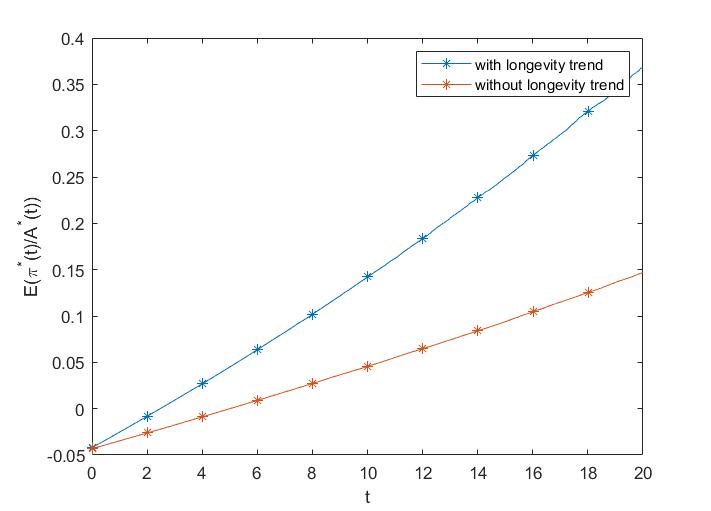}
	
	\caption{The impacts of longevity trend on optimal investment proportion $\mathbb{E}(\frac{\pi^{*}(t)}{A^{*}(t)})$ } \label{pi_longevity}
\end{figure}

At last, under the assumptions of without uncertainty and longevity trend, the contrubtion is relatively low and the benefit is raltively high. Besides, the investment proportion has a total different pattern. Without uncertainty and longevity trend, pension manager has faith to the true model and no pressure of aging problem. Thus, it is reasonable to decrease the proportion of investment to risk asset and the contributions, and retirees can get more pension after they retired which is in line with the \cite{2020Optimal} and \cite{Wang2018Optimal}.

\begin{figure} [H]
	\centering
	\subfigure[]{\includegraphics[scale=0.3]{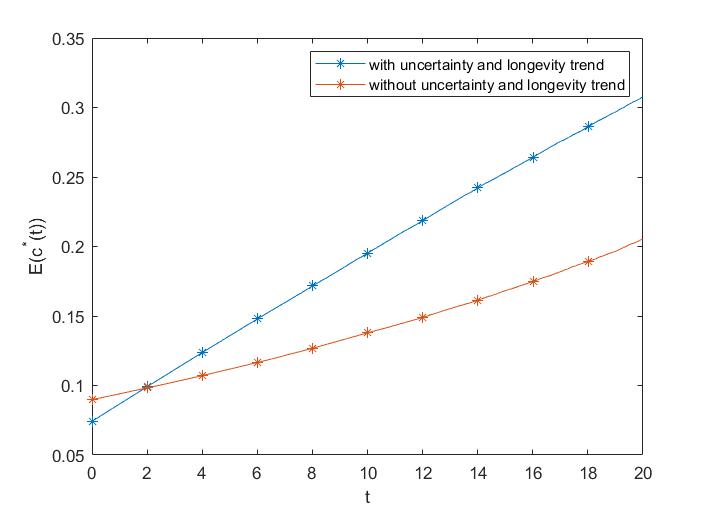}}
	\subfigure[]{\includegraphics[scale=0.3]{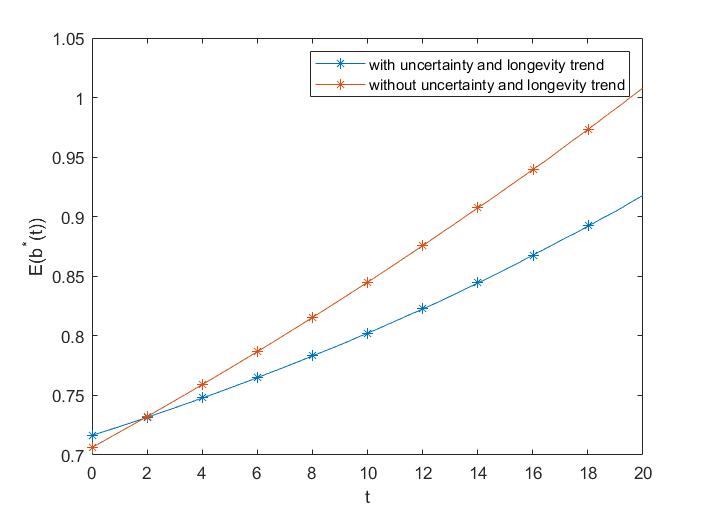}}
	\caption{The impacts of uncertainty and longevity trend on optimal contribution $\mathbb{E}c^{*}(t)$ and optimal benefit $\mathbb{E}b^{*}(t)$}  \label{c&b_longevity_uncertainty}
\end{figure}

\begin{figure}[H] 
	\centering
	\includegraphics[scale=0.3]{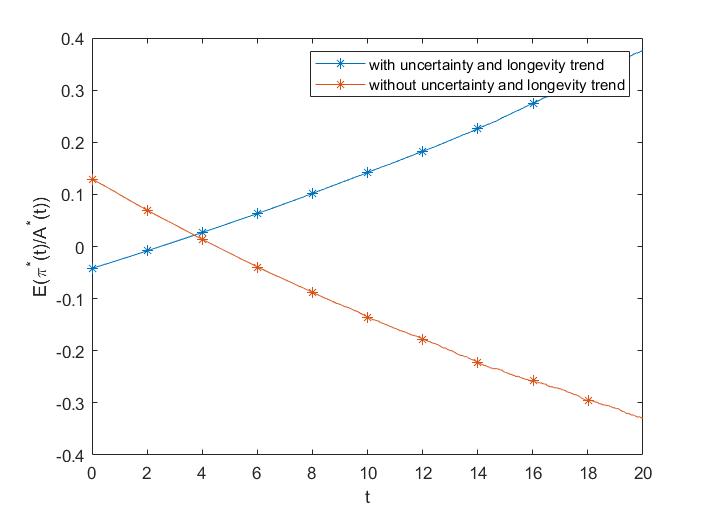}
	
	\caption{The impacts of uncertainty and longevity trend on optimal investment proportion $\mathbb{E}(\frac{\pi^{*}(t)}{A^{*}(t)})$ } \label{pi_longevity_uncertainty}
\end{figure}

\section{Conclusion}
This paper investigates the optimal contribution, the optimal benefit payment and the optimal asset allocation policies to minimize the cost of unstable contribution risk, the cost of unstable benefit risk and discontinuity risk in a hybrid pension plan under model uncertainty. Considering that all the participants are in an aggregate pension fund with both active members and retirees in an overlapping generations economy, we propose a dynamic and controllable model for this hybrid pension fund. Besides, a linear fashion of maximum survival age and an age and time-dependent force of mortality is considered to capture the longevity trend which is decrease with time and increase with age. The fund asset is invested in a risk-free asset and risky asset, and we employ the penalized quadratic deviations as the cost function. By adjusting both the contribution rate and benift according to the surplus or deplict (the difference between the fund asset and the target liabilities), the fund residual can be shared across generations, including employees and retirees. Solving the minimization problem by HJB under the worst scenario, we obtain the robust optimal strategies in our hybrid pension plan. In the part of numercial illustrations, we investigate the impact of different parameters. In most cases, there are opposite trends between the contribution rate and the benefit, since the pension fund should always be stable and balanced. Include, the time interval have a positive impact on contribution and risky aseet proprotion, which means we must have sufficient money to face the long-term hybrid pension system. Besides, Postponing the retirement age could offset the negative impact of decreasing new entrants growth rate, which lead to lower contribution and higher benefit. Therefore, postponing the retirement is an feasible measure to alleviate the aderverse influence of the aging problem. In the section of three special cases, we notice that taking the uncertainty and longevity into consideration increase the proportion of investment to the risk asset and contributions, and decrease the benefits.

However, there are still many aspects that can be researched in our future study. Firstly, we assume that the force of mortality is age and time-dependent. This hypothesis is reasonable due to the decreasing fertility and increasing life expectancy. Furthermore, it is worth mentioning that some research has emphasized the relationship between socio-economic status, income and mortality; see, \cite{Andr2014On} and \cite{2017NDC}. Taking the socio-economic status and income into consideration in our model of mortality might provide practically more equitable risk sharing strategies. Secondly, other sources of persistent systematic risks and market incompleteness, like inflation risk and real interest rate risk might further strengthen the welfare-enhancing potential of intergenerational risk sharing. Since these risks are often non-hedgable in the financial markets. What's more, we assume that the salary is constant 1 whereas it is unrealistic. Actually, it is more reasonal if there is a stochastic salary process, and we will leave it into our future work. At last, by default, We denote the rate of discount by risk-free interest rate. However, \cite{Jesus2009Non} and \cite{2020Time} introduced the analysis of dynamic optimization problems in continuous time setting with non-constant rate of discount. That is a very practical assumption because the non-constant discounting reflect the temporal preferences depend on the temporal position of the decision maker, but it could arise the time inconsistency, so we need modified HJB equation to solve this kind of problem.

\section{Acknowledment}
The authors would be very grateful to referees for their suggestions and this research was supported by the National Natural Science Foundation of China (Grant Nos. 11871052, 12171360, 11771329), Key Project of the Nation Social Science Foundation of China (Grant No. 21AZD071) and Natural Science Foundation of Tianjin City (Grant No. 20JCYBJC01160).


\begin{thebibliography}{00}	
\bibliographystyle{elsarticle-num}	

\bibitem[Kortleve(2013)]{2013The} Kortleve, N., 2013. The ``defined ambition'' pension plan: a dutch interpretation. SSRN Electronic Journal, 6.

\bibitem[Bovenberg et al.(2014)]{2014The}
Bovenberg, A.L., Mehlkopf, R., Nijman, T., 2014. The promise of defined ambition plans: lessons for the united states. SSRN Electronic Journal. 23, 45-62.

\bibitem[Munnell and Sass(2013)]{2013New}
Munnell, A.H., Sass, S.A., 2013. New brunswick's new shared risk pension plan. State and Local Pension Plans Briefs.

\bibitem[Wang et al.(2018)]{Wang2018Optimal}
Wang, S.X., Lu, Y., Sanders, B., 2018. Optimal investment strategies and intergenerational risk sharing for target benefit pension plans. Insurance Mathematics and Economics.

\bibitem[Pugh and Yermo(2008)]{Pugh2008Funding}
Pugh, C., Yermo, J., 2008. Funding regulations and risk sharing. Oecd Working Papers on Insurance and Private Pensions. 2008, 163-196.

\bibitem[Chen and Hardy(2009)]{Kai2009The}
Chen, K., Hardy, M.R., 2009. The DB underpin hybrid pension plan. North American Actuarial Journal. 13, 407-424.

\bibitem[Alonso-Garcia and Devolder(2016)]{2016Optimal}
Alonso-Garcia, J., Devolder, P., 2016. Optimal mix between pay-as-you-go and funding for dc pension schemes in an overlapping generations model. Insurance Mathematics and Economics. 70, 224-236.

\bibitem[Alonso-Garcia et al.(2018)]{Jennifer2017Automatic}
Alonso-García, J., Boado-Penas, M.D.C., Devolder, P., 2018. Automatic balancing mechanisms for notional defined contribution accounts in the presence of uncertainty. Scandinavian Actuarial Journal. 2018, 85-108.

\bibitem[Turner and Center(2014)]{2014Hybrid}
Turner, J.A., Center, P.P., 2014. Hybrid pensions: risk sharing arrangements for pension plan sponsors and participants.

\bibitem[Gollier(2008)]{2008Intergenerational}
Gollier, C., 2008. Intergenerational risk-sharing and risk-taking of a pension fund. Journal of Public Economics. 92, 1463-1485.

\bibitem[Cui et al.(2011)]{2011Intergenerational}
Cui, J., Jong, F.D., Ponds, E., 2011. Intergenerational risk sharing within funded pension schemes. Journal of Pension Economics and Finance. 10(1), 1-29.

\bibitem[Jean-Franois(2020)]{Jean2020Levelling}
Jean-Franois, B., 2020. Levelling the playing field: a vix-linked structure for funded pension schemes. Insurance: Mathematics and Economics. 94.

\bibitem[Teulings and De Vries(2006)]{Teulings2006Generational}
Teulings, C.N., De Vries, C.G., 2006. Generational accounting, solidarity and pension losses. De Economist. 154, 63-83.

\bibitem[Beetsma et al.(2012)]{2012Voluntary}
Beetsma, R.M.W.J., Romp, W.E., Vos, S.J., 2012. Voluntary participation and intergenerational risk sharing in a funded pension system. European Economic Review. 56.

\bibitem[Beetsma(2016)]{Beetsma2016Intergenerational}
Beetsma, R., 2016. Intergenerational risk sharing. Handbook of the Economics of Population Aging. 311-380.

\bibitem[Boes and Siegmann(2018)]{Boes2018Intergenerational}
Boes, M.J., Siegmann, A., 2018. Intergenerational risk sharing under loss averse preferences. Journal of Banking. 92, 269-279.

\bibitem[Khorasanee(2012)]{2012Risk}
Khorasanee, Z.M., 2012. Risk-sharing and benefit smoothing in a hybrid pension plan. North American Actuarial Journal. 16.

\bibitem[Wang and Lu(2019)]{2019Optimal} Wang, S.X., Lu, Y., 2019. Optimal investment strategies and risk-sharing arrangements for a hybrid pension plan. Insurance: Mathematics and Economics. 89.

\bibitem[He et al.(2020)]{2020Optimal}
He, L., Liang, Z.X., Yuan, F., 2020. Optimal DB-PAYGO pension management towards a habitual contribution rate. Insurance Mathematics and Economics. 94.


\bibitem[Strulik and Vollmer(2013)]{2013Long}
Strulik, H., Vollmer, S., 2013. Long-run trends of human aging and longevity. Journal of Population Economics. 26, 1303-1323.

\bibitem[Knell(2018)]{Knell2018Increasing}
Knell, M., 2018. Increasing life expectancy and ndc pension systems. Journal of Pension Economics and Finance. 17, 1-30.

\bibitem[Devolder and Melis(2015)]{2015OPTIMAL}
Devolder, P., Melis, R., 2015. Optimal mix between pay as you go and funding for pension liabilities in a stochastic framework. Astin Bulletin. 45, 551-575.

\bibitem[Alonso-Garcia and Devolder(2019)]{2019Continuous} Alonso-Garcia, J., Devolder, P., 2019. Continuous time model for notional defined contribution pension schemes: liquidity and solvency. LIDAM Reprints ISBA.

\bibitem[Andrés et al.(2014)]{Andr2014On}
Andrés, M., Villegas, S., Haberman, 2014. On the modeling and forecasting of socioeconomic mortality differentials: an application to deprivation and mortality in england. North American Actuarial Journal. 18, 168-193.

\bibitem[Holzmann et al.(2017)]{2017NDC}
Holzmann, R., Alonso-García, J., Labit-Hardy, H., Andrés M.V., 2017. NDC schemes and heterogeneity in longevity: proposals for redesign. SSRN Electronic Journal.

\bibitem[Merton(1980)]{1980On}
Merton, R.C., 1980. On estimating the expected return on the market : an exploratory investigation. Journal of Financial Economics. 8.

\bibitem[Anderson at el.(2003)]{0A}
Anderson, E.W., Peter, H.L., Sargent, T.J., 2003. A quartet of semigroups for model specification, robustness, prices of risk, and model detection. Journal of the European Economic Association. 68-123.


\bibitem[Wang and Li(2018)]{2018Robust}
Wang, P., Li, Z., 2018. Robust optimal investment strategy for an aam of dc pension plans with stochastic interest rate and stochastic volatility. Insurance Mathematics and Economics. 80.


\bibitem[Wang Peiqi et al.(2021)]{2021WANG}
Wang Peiqi et al. Robust optimal investment and benefit payment adjustment strategy for target benefit pension plans under default risk. Journal of Computational and Applied Mathematics, 2021, 391


\bibitem[Ponds and Van(2007)]{2009Sharing} Ponds, E.H.M., Van Riel, B., 2007.  Sharing risk : the netherlands' new approach to pensions. Other publications TiSEM. 8, 91-105.

\bibitem[Haberman(1997)]{1997Stochastic}
Haberman, S., 1997. Stochastic investment returns and contribution rate risk in a defined benefit pension scheme. Insurance Mathematics and Economics. 19, 127-139.

\bibitem[Josa-Fombellida et al.(2001)]{2001Minimization}
Josa-Fombellida, R., Juan Pablo R.Z., 2001. Minimization of risks in pension funding by means of contributions and portfolio selection. Insurance Mathematics and Economics. 29, 35-45.

\bibitem[Ngwira and Gerrard(2007)]{2007Stochastic}
Ngwira, B., Gerrard, R., 2007. Stochastic pension fund control in the presence of poisson jumps. Insurance: Mathematics and Economics. 40, 283-292.


\bibitem[European Commission(2015)]{European Commission(2015)}
European Commission. 2015. The 2015 Ageing Report: Economic and budgetary projections for the 28 EU Member States (2013-2060). European Economy, 3.

\bibitem[Oeppen and Vaupel(2002)]{Oeppen2002}
Oeppen, J. Broken Limits to Life Expectancy. Science, 2002, 296(5570):1029-1031.


\bibitem[He et al.(2021)]{he}
He, L., Liang, Z., Song, Y., Ye, Q., 2021. Optimal contribution rate of paygo pension. Scandinavian Actuarial Journal. 1-27.

\bibitem[Peter et al.(2010)]{0Ambiguity}
Peter, B., Paolo, G., Serena, G., Zame, W.R., 2010. Ambiguity in asset markets: theory and experiment. The Review of Financial Studies. 4.

\bibitem[Uppal and Wang(2003)]{2003Model}
Uppal, R., Wang, T., 2003. Model misspecification and underdiversification. The Journal of Finance. 58.

\bibitem[Yi et al.(2014)]{2015Robust}
Yi, B., Viens, F., Li, Z., Zeng, Y., (2014). Robust optimal strategies for an insurer with reinsurance and investment under benchmark and mean-variance criteria. Scandinavian Actuarial Journal.

\bibitem[Jesus and Navas(2009)]{Jesus2009Non}
Marin-Solano, J., Navas, J., 2009. Non-constant discounting in finite horizon: the free terminal time case. Journal of Economic Dynamics and Control. 33, 666-675.

\bibitem[Josa-Fombellida and Navas(2020)]{2020Time}
Josa-Fombellida, R., Navas, J., 2020. Time consistent pension funding in a defined benefit pension plan with non-constant discounting. Insurance: Mathematics and Economics, 94.


























\end{thebibliography}

\end{document}